\font\tencyr=wncyr10
\def\cyr{\tencyr\cyracc}
\numberwithin{equation}{section}
\newcommand{\dd}{\operatorname{d}}
\newcommand{\e}{{\mbox{\rm e}}}
\newcommand{\mb}[1]{{\mbox{\boldmath{$#1$}}}}
\newcommand{\mc}[1]{{\mathcal{#1}}}
\newcommand{\got}[1]{{\mathfrak{#1}}}
\newcommand{\db}[1]{{\mathbb{#1}}}
\newcommand{\pa}{\partial}
\newcommand{\R}{\ensuremath{\mathbb{R}}}
\newcommand{\C}{\ensuremath{\mathbb{C}}}
\newcommand{\N}{\ensuremath{\mathbb{N}}}
\newcommand{\Hi}{\ensuremath{\got{H}}}
\newcommand{\Z}{\ensuremath{\mathbb{Z}}}
\newcommand{\Hinf}{\ensuremath{\mathcal{H}^{\infty}}}
\renewcommand{\P}{\ensuremath{\mathbb{P}}}
\newcommand{\Phinf}{\ensuremath{\P (\Hinf )}}
\newtheorem{Remark}{Remark}
\newcommand{\fl}{\ensuremath{{\got{F}}_{\Hi}}}
\newcommand{\un}{\ensuremath{\mathbb{1}_n}}
\newtheorem{Proposition}{Proposition}
\theoremstyle{definition}
\newcommand{\gl}{\ensuremath{\mc{L}}}
\newcommand{\gf}{\ensuremath{\mc{F}}}
\newcommand{\gcp}{\ensuremath{\db{CP}}}
\newcommand{\gpl}{\mbox{{\bf P}(\gl )}}
\begin{document}
\title{Bergman representative coordinates on the Siegel-Jacobi disk} 
\author{Stefan  Berceanu}
\address[Stefan  Berceanu]{National
 Institute for Physics and Nuclear Engineering\\
         Department of Theoretical Physics\\
         PO BOX MG-6, Bucharest-Magurele, Romania}
\email{Berceanu@theory.nipne.ro}

\begin{abstract}
We underline some differences between the geometric aspect  of
Berezin's approach  to quantization on homogeneous K\"ahler manifolds 
and Bergman's construction for bounded domains in $\mathbb{C}^n$.
We construct explicitly the Bergman representative coordinates for the
Siegel-Jacobi disk $\mathcal{D}^J_1$, which is a partially bounded manifold whose
points belong to  $\mathbb{C}\times\mathcal{D}_1$, where $\mathcal{D}_1$
denotes the Siegel disk. The Bergman representative coordinates on
$\mathcal{D}^J_1$  are globally defined, the Siegel-Jacobi disk is  a
normal  K\"ahler homogeneous  Lu Qi-Keng manifold,  whose 
representative manifold is the Siegel-Jacobi disk itself.
\end{abstract}
\subjclass{81S10,81R30,32Q15,57Q35}
\keywords{Quantization Berezin, homogeneous K\"ahler manifolds,
  coherent states, holomorphic embeddings, 
  Bergman representative coordinates, Lu Qi-Keng manifold,  Jacobi
  group, Siegel-Jacobi disk}
\maketitle
\noindent
\tableofcontents
\newpage
\section{Introduction}\label{intro}

In this paper we discuss several geometric issues   of   Berezin's approach to quantization on
homogeneous K\"ahler manifolds. The starting
point is a weighted Hilbert space of square integrable holomorphic functions $\Hi_f$
defined on a homogeneous K\"ahler manifold $M$ of weight $\e^{-f}$,  where the
K\"ahler potential $f$ is  the logarithm of the kernel function
$K_M$, which is obtained as the scalar product of two coherent states
(CS) defined on
$M$ \cite{perG}. The metric $\dd s^2_M$ associated with $f=\ln K_M$ is
in fact the so called 
{\it balanced metric} \cite{don,arr,alo}.
The  metric $\dd s^2_M$ is different from the  Bergman metric $\dd s^2_{\mc{B}_2}$, 
defined  on bounded domains \cite{berg,berg1,berg2} or
in  Kobayashi's extension to manifolds, $\dd s^2_{\mc{B}_n}$,  based on a Hilbert space of square integrable
top degree holomorphic forms $\gf_n(M)$ defined on $M$ \cite{koba}. 

In a series of papers  starting with \cite{jac1,sbj,nou} we have constructed CS
\cite{perG} attached to the Jacobi group $G^J_n=
H_n\rtimes\text{Sp}(n,\R)$, where $H_n$ denotes the 
$(2n+1)$-dimensional Heisenberg group \cite{ez,bs}. The Jacobi group
is important in Quantum Mechanics, being responsible  for the squeezed states in
Quantum Optics, see references in \cite{jac1,sbj,sbcg,ber9,gem,FC1}. The homogeneous K\"ahler manifolds
$\mc{D}^J_n$  are attached to
the Jacobi group $G^J_n$ \cite{satake,gem,Y10}.
The  Siegal-Jacobi
ball $\mc{D}^J_n$  is not a bounded domain,  but 
     {\it partially bounded}  (this denomination is  borrowed from
     \cite{Y08,Y10}), 
its points being in $\C^n\times \mc{D}_n$, where $\mc{D}_n=\text{Sp}(n,\R)_{\C}/\text{U}(n)$ denotes the
Siegel bounded domain (Siegel ball) \cite{sbj,nou,Y08,Y10}.

We introduce   the term {\it normal manifolds}  of
Lichnerowicz \cite{Lich}  to designate manifolds for which $K_M(z)\not= 0,
\forall  z\in M$.  Also we use 
an  extension of the meaning of {\it Kobayashi manifolds}  which
appears in the book  \cite{ps1} of
Piatetski-Shapiro, and  we advance   the  denomination  {\it Lu Qi-Keng
  manifolds} to designate  manifolds on which  $K_M(z,\bar{w})\not= 0, \forall
z,w\in M$, extending to manifolds  the notion  introduced by  Lu
Qi-Keng  for domains \cite{lu66}.  

In this paper we 
construct explicitly the Bergman representative coordinates for the
Siegel-Jacobi disk  $\mc{D}^J_1$. Usually,  the Bergman representative coordinates
are defined on bounded domains $\mc{D}\subset\C^n$ \cite{berg}.  We show
that the Bergman representative coordinates on $\mc{D}^J_1$ are globally
defined and  the Siegel-Jacobi disk is  a homogeneous K\"ahler  Lu
Qi-Keng  manifold,  whose
representative manifold is  the Siegel-Jacobi disk itself.

The paper is laid out as follows. Berezin's recipe of
quantization \cite{ber73,ber74,ber75} using CS 
\cite{perG}  adopted in \cite{sb6,jac1,sbj,nou,csg}
is presented  in \S \ref{gi} in the  geometric
setting of the papers \cite{don,alo,arr}, where the 
$\epsilon $-function \cite{raw,Cah,cah} is constant. We underline the difference between the
balanced metric and the Bergman metric \cite{berg,berg1}. \S
\ref{TSJD} summarizes the geometric information on the Siegel-Jacobi
disk extracted from
\cite{ jac1,nou,FC,csg} in a more precise formulation. In Proposition
\ref{prop2} we have included   the expression of 
the Laplace-Beltrami operator on the Siegel-Jacobi disk, also
obtained  in \cite{Y10}. In \S \ref{EMSS} we discuss the embedding of
the Siegel-Jacobi disk in an infinite-dimensional projective Hilbert
space, underling that the Jacobi group $G^J_1$ is a CS-group
\cite{lis,neeb}. We emphasize  the difference of this embedding
comparatively with 
Kobayashi embedding \cite{koba}. \S \ref{BRC} is devoted to the  Bergman
representative coordinates \cite{berg}. Firstly the definition of the
Bergman representative
coordinates for homogeneous K\"ahler manifolds is given.Then  some general properties of the
Bergman representative coordinates are
recalled in Remark \ref{RRE}. In \S \ref{ssimp} the simplest example of
the Siegel disk is presented. The notion of Lu Qi-Keng manifold is
introduced.
 In  \cite{ber97}  we have called  the set
$\Sigma_z:=\{w\in M|K_M(z,\bar{w})=0\}$  {\it{polar divisor of}}
$z\in M$, underling its meaning in algebraic geometry \cite{gh}, and its 
equality with the cut locus \cite{koba2} for some compact homogeneous manifolds. 
 In Proposition \ref{PR1} it is underlined that the
Siegel-Jacobi disk is a Lu Qi-Keng manifold and the Bergman representative
coordinates are globally defined on it.  In  Proposition 
\ref{REMFF} it is proved that the representative manifold of the
Siegel-Jacobi disk is the Siegel-Jacobi disk itself. The Appendix \S  \ref{bsmb} is dedicated to the Bergman
pseudometric and metric. Also the notion of Kobayashi
embedding \cite{koba} and Kobayashi manifold \cite{ps1} are recalled
in   \S \ref{EMBB}.

{\bf Notation}. 
$\R$, $\C$, $\Z$  and $\N$ denotes the fields of real,
complex numbers, the set of non-negative integers,  respectively the ring of  integers.  We denote the imaginary unit
$\sqrt{-1}$ by $i$, and the Real and Imaginary part of a complex
number by $\Re$ and respectively $\Im$, i.e. we have for $z\in\C$,
$z=\Re z+i \Im z$, and $\bar{z}=\Re z-i \Im z$, also denoted
$cc(z)$.
 In this paper the Hilbert space $\Hi$ is endowed with
a scalar product $(\cdot,\cdot)$ 
antilinear in the first argument, i.e. $(\lambda x,y)=\bar{\lambda}(x,y)$,
$x,y\in\Hi,\lambda\in\C\setminus 0$.
 We denote by $\dd$ the differential, and we have $\dd =\pa
 +\bar{\pa}$, where, in a local system of coordinates
 $(z_1,\dots,z_n)$ on a complex manifold $M$, we have $\pa f=
 \sum_{i=1}^n\frac{\pa f}{\pa z_i}\dd z_i$. 

\section{Berezin's quantization: the starting point}\label{gi}
Firstly we    highlight  the relationship of the  approach
to geometry  adopted 
in the papers \cite{jac1,sbj,nou,FC,csg} in the context of CS 
defined in mathematical physics \cite{ber74,perG} with the ``pure''  geometric
approach of mathematicians \cite{berg,berg1,berg2,berg3,koba,Lich,arr}.

Let $M$ be a  complex manifold  of complex dimension $n$. $M$ is
called {\it{hermitian}} if a hermitian structure $\operatorname{H}$  is given in its
tangent bundle $T(M)$ \cite{chern}. If we choose  a local coordinate system
$(z_1,\dots,z_n)$, then a natural frame is given by $\frac{\pa}{\pa z_i}$,
$i=1,\dots,n$. Let us denote $h_{i\bar{j}}:=\operatorname{H}(\frac{\pa}{\pa z_i},\frac{\pa}{\pa
  \bar{z}_j}), i,j=1,\dots, n $, 
and the matrix $h$ is positive definite hermitian.  A hermitian
manifold is called {{\it K\"ahlerian}}  if its  K\"ahler two-form, i.e. the   real-valued  $(1,1)$-form
\begin{equation}\label{kall}
\omega_M(z)=i\sum_{i,j=1}^{n} h_{i\bar{j}} (z) \dd z_{i}\wedge
\dd\bar{z}_{j}, 
\end{equation}
is closed,  i.e. $\dd \omega_M=0$.  Equivalently (see e.g. Theorem C at
p. 56 in \cite{chern}),  {\it{a hermitian manifold is K\"ahlerian if
and only if there exists a real-valued $C^{\infty}$ function $f$ - the
 K\"ahler potential -  such
that}} 
\begin{equation}\label{OMEGAMM}
\omega_M=i \pa\bar{\pa} f, \end{equation}
{\it{ and   in \eqref{kall} we have}}
\begin{equation}\label{Ter}
h_{i\bar{j}}(z)= \frac{\pa^2 f}{\pa {z}_{i}\pa \bar{z}_j}.
\end{equation}

An {\it{automorphism}} of the K\"ahler manifold $M$ is an
invertible holomorphic mapping preserving the Hermitian structure $\operatorname{H}$. The
K\"ahler manifold $M$ is called {\it{homogeneous}} if the group $G(M)$
of all automorphisms of  $M$ acts transitively on it, see e.g. \cite{gpv}. A  ``domain''
is a connected open subset of $\C^n$, $\mc{D}\subset\C^n$,  and ``bounded'' means 
relatively compact. The Bergman metric (see \cite{berg1,FUKs2,AWeil} and
\eqref{berg11} in 
the Appendix) defines in $\mc{D}$ a canonical
K\"ahlerian structure,  and $G_A(\mc{D})= G(\mc{D})$, where
  $G_A(M)$ denotes the group of invertible holomorphic transformation
  of the manifold $M$ \cite{gpv}.

Let us consider the
weighted Hilbert space $\Hi_f$ of square integrable holomorphic functions on
$M$, with weight $\e^{-f}$ 
\begin{equation}\label{HIF}
\Hi_f=\left\{\phi\in\text{hol}(M) | \int_M\e^{-f}|\phi|^2 \Omega_M
  <\infty \right\},
\end{equation}
where $\Omega_M$ is the volume form
\begin{equation}\label{OMM}
\Omega_M:=\frac {1}{n!}\;
\underbrace{\omega\wedge\ldots\wedge\omega}_{\text{$n$ times}},
\end{equation}
$G$-invariant for $G$-homogeneous manifolds $M$.

If $\Hi_f\not= 0$, let  $\left\{\varphi_j(z)\right\}_{j=0,1,\dots}$ be  an orthonormal  base of functions
of $\Hi_f$ and define the  kernel function by 
\begin{equation}\label{sumBERG}
K_M(z,\bar{z})=\sum_{i=0}^{\infty}\varphi_i(z)\bar{\varphi}_i(z).
\end{equation}
For compact manifolds $M$, $\Hi_f$ is finite dimensional  and 
the sum \eqref{sumBERG} is finite.

In order to fix the  notation on CS \cite{perG}, let us consider the triplet $(G, \pi , \got{H} )$, where $\pi$ is
 a continuous, unitary, irreducible 
representation
 of the  Lie group $G$
 on the   separable  complex  Hilbert space \Hi . 

Let us now denote by $H$  the isotropy group with Lie subalgebra
$\got{h}$ of the Lie algebra $\got{g}$ of $G$.  
We  consider (generalized)  CS  on complex  homogeneous manifolds $M\cong
G/H$ \cite{perG}.  
{\em The coherent vector
 mapping}  $\varphi$ is defined locally, on a coordinate neighborhood
$\mc{V}_0\subset M$ (cf. \cite{last,sb6}):
 \begin{equation}\label{ECFI}\varphi : M\rightarrow \bar{\Hi}, ~
 \varphi(z)=e_{\bar{z}},
\end{equation}
where $ \bar{\Hi}$ denotes the Hilbert space conjugate to $\Hi$.
The  vectors $e_{\bar{z}}\in\bar{\Hi}$ indexed by the points
 $z \in M $ are called  {\it
Perelomov's  CS vectors}. Using Perelomov's CS
vectors, we consider Berezin's approach to quantization on K\"ahler
manifolds with the supercomplete set of vectors
verifying the Parceval   overcompletness identity \cite{ber73}-\cite{berezin},
\begin{equation}\label{PAR}
(\psi_1,\psi_2)_{\Hi^*}=\int_M (\psi_1,e_{\bar{z}}) (e_{\bar{z}},\psi_2) \dd
\nu_M(z,\bar{z}), \quad \psi_1,\psi_2\in\Hi ,
\end{equation}
where we have identified the space $\overline{\Hi}$  complex conjugate
 to \Hi~  with the dual
space
$\Hi^{\star}$ of $\Hi$. 
$\dd{\nu}_{M}$  in \eqref{PAR} is the quasi-invariant measure on $M$
given by 
\begin{equation}\label{DELNU}
\dd{\nu}_{M}(z,\bar{z})=\frac{\Omega_M(z,\bar{z})}{(e_{\bar{z}},e_{\bar{z}})},
\end{equation}
where 
 $\Omega_M$ is the normalized  $G$-invariant volume form
 \eqref{OMM}. In fact, \eqref{PAR}, \eqref{DELNU}  and
 \eqref{OMM} are formula  (2.1)  and the next one on p. 1125 in
 Berezin's paper \cite{ber74}. 

If the $G$-invariant  K\"ahler two-form $\omega$ on the (real) $2n$-dimensional
manifold  $M=G/H$  has the expression  \eqref{kall}, 
then in \eqref{OMM}  (see
e.g.  (4.2) in  \cite{ball}) we have:
\begin{equation}\label{nomega}
\underbrace{\omega\wedge\ldots\wedge\omega}_{\text{$n$ times}}= i^n
n !~ \mc{G}(z) \dd z_1\wedge\dd \bar{z}_1\wedge\dots\wedge \dd z_n\wedge\dd\bar{z}_n, 
\end{equation}
 where the density of the $G$-invariant volume $\mc{G}(z)$ has the
expression: 
\begin{equation}\label{DETG}
~ \mc{G}(z):=\det
(h_{i\bar{j}}(z))_{i,j=1,\dots,n}.
\end{equation}
If $z_j=x_j+i y_j$, $j=1,\dots,n$, then we have the relations (see
e.g. (5) on p. 14 in \cite{AWeil}):
\begin{equation}\label{VPOS}
2^nx_1\wedge y_1\wedge \dots\wedge x_n\wedge
y_n=i^nz_1\wedge\bar{z_1}\wedge\dots\wedge
z_n\wedge\bar{z_n}=i^{n^2}
z_1\wedge\dots\wedge z_n\wedge\bar{ z}_1\wedge\dots\wedge\bar{z}_n.
\end{equation}

{\it We  fixe the orientation for which the real $2n$-vector
\eqref{VPOS} is positive} \cite{AWeil}. Introducing in \eqref{OMM} the
relation \eqref{nomega}, with
\eqref{VPOS},  we find 
\begin{equation}\label{OMMF}
\Omega_M=\mc{G} \dd V, \quad {\text{where}}\quad\dd V =2^n\dd  x_1\wedge \dd y_1\wedge
\dots\wedge \dd x_n\wedge \dd y_n.
\end{equation}
Let us  introduce the mapping $\Phi :\Hi^{\star}\!\rightarrow \fl$ (cf \cite{last,sb6}) 
\begin{equation}\label{aa}
\Phi(\psi):=f_{\psi},
f_{\psi}(z)=\Phi(\psi )(z)=(\varphi (z),\psi)_{\Hi}=(e_{\bar{z}},\psi)_{\Hi},
~z\in{\mathcal{V}}_0\subset M. 
\end{equation}
We  denote 
 by  $\fl:= L^2 _{\text{hol}} (M,\dd \nu_M)\cap\mc{O}(M)$  the Hilbert space of
 holomorphic, square integrable  functions  with respect to  the
 scalar product on $M$  given by the r.h.s. of \eqref{PAR}, 
\begin{equation}\label{scf}
(f,g)_{\fl} =\int_{M}\bar{f}(z)g(z)
\frac{\Omega_M(z,\bar{z})}{K_M(z,\bar{z})}
~=\int_{M}\bar{f}(z)g(z)\frac{\mc{G}}{K_M}\dd V,
\end{equation}
where $K_M: M\times \bar{M}\rightarrow\C$ admits the local   series
expansion in a base of orthonormal functions $\{\varphi_i\}_{i\in\N}$ with respect with the
scalar product \eqref{scf}, independent of the orthonormal base  (cf Theorem 2.1 in \cite{ber74}, based on  \cite{FUKs1})
\begin{equation}\label{funck}
K_M(z,\bar{w})\equiv (e_{\bar{z}},e_{\bar{w}}) =
\sum_{i=0}^{\infty}\varphi_i(z)\bar{\varphi}_i(w), \end{equation}
and
\begin{equation}\label{functphi}
\int_M\bar{\varphi}_i\varphi_j\frac{\Omega_M}{K_M}=\delta_{ij},~ i,j\in \N.
\end{equation}
{\it The function $K_M(z,\bar{w})$ is a reproducing kernel, i.e.  for
  $f\in\fl$, we
  have the relation} $$f(z)=(f,e_{\bar{z}})_{\fl}=\int_{M}(f,e_{\bar{w}})_{\fl}K_M(w,\bar{z})\frac{\Omega_M(w,\bar{w})}{K_M(w,\bar{w})}.$$ {\it  The symmetric
Fock space $\fl$ is the reproducing Hilbert space  with the scalar
product \eqref{scf}  attached to the 
kernel  function $K_M$ and the  evaluation map $\Phi$ defined by \eqref{aa}
 extends to an isometry} \cite{sb6} 
\begin{equation}\label{anti}
(\psi_1,\psi_2)_{\Hi^{\star}}=(\Phi (\psi_1),\Phi
(\psi_2))_{\fl}=(f_{\psi_{1}},f_{\psi_{2}})_{\fl}=
\int_M\overline{f}_{\psi_1} (z)f_{\psi_2}(z)\dd \nu_M(z) .
\end{equation}

In order to identify the Hilbert space $\Hi_f$ defined by \eqref{HIF}
with the Hilbert space with scalar product \eqref{PAR}, we have to
consider  the $\epsilon$-function  \cite{raw,Cah, cah}
\begin{equation}\label{EPSF}
\epsilon(z) = \e^{-f(z)}K_M(z,\bar{z}).
\end{equation}
If the K\"ahler metric on the complex manifold  $M$ is obtained by the
K\"ahler potential via \eqref{OMEGAMM} is such that
$\epsilon(z)$ is a positive  constant, then the metric is called {\it balanced}.
This denomination was firstly used  in \cite{don} for compact manifolds, then
it was used in \cite{arr} for noncompact manifolds and also in \cite{alo} in
the context of Berezin quantization on homogeneous bounded
domains. Note that 
{\it Condition } A) on p. 1132  {\it in Berezin's  paper} \cite{ber74}  {\it is exactly the
condition}  $\epsilon= ct$ in \eqref{EPSF}, where we have included the
Planck constant $h$ in the K\"ahler potential $f$.

In  \cite{sb6,sbj,FC} it was   considered the case
$\epsilon=1$, i.e.  we have considered in \eqref{OMEGAMM} $f=\ln
K$.

Under the condition $\epsilon=1$ in \eqref{EPSF}, the {\it hermitian
  balanced metric} of $M$ in
local coordinates  is obtained from the  scalar product   \eqref{scf} of two
CS vectors $(e_{\bar{z}},e_{\bar{z}}) =K_M(z,\bar{z})$ as 
\begin{equation}\label{herm}
\dd s^2_M(z)=\sum_{i,{j}=1}^nh_{i\bar{j}}\dd
z_{i}\otimes \dd\bar{z}_{j} =\sum_{i,{j}=1}^n \frac{\pa^2}{\pa
  z_{i} \pa\bar{z}_{j}} \ln (K_M(z,\bar{z}))
\dd
z_{i}\otimes \dd\bar{z}_{j} ,
\end{equation} 
with the associated K\"ahler two-form
\begin{equation}\label{hermo}
\omega_M(z)=i\sum_{i,{j}=1}^nh_{i\bar{j}}\dd
z_{i}\wedge \dd\bar{z}_{j} =\sum_{i,{j}=1}^n \frac{\pa^2}{\pa
  z_{i} \pa\bar{z}_{j}} \ln (K_M(z,\bar{z}))
\dd
z_{i}\wedge\dd\bar{z}_{j} .
\end{equation} 
We recall that in the footnote $*$ on p. 1128 in Berezin's paper \cite{ber74} it is
 mentioned that instead of the balanced K\"ahler  two-form  $\omega_M$ given by
 formula \eqref{hermo}, it should be alternatively used the (Bergman) K\"ahler two-form
\begin{equation}\label{omBERG} 
\omega^1_M=i\pa\bar{\pa}\ln(\mc{G})
,\end{equation} firstly used in the case of bounded
 domains and extended by Kobayashi \cite{koba} to a class of manifolds,   see \eqref{altom} in Proposition
 \ref{BIGCOR} in the Appendix. $\mc{G}$ is defined  in \eqref{nomega},
 \eqref{DETG}. In such a case, in \eqref{PAR} we should use instead
 of $\dd \nu_M$ given by \eqref{DELNU}, the expression $\dd
 \nu_M=(\omega^1_M)^n$.  Berezin has used in \cite{ber74}  the balanced  K\"ahler two-form
 \eqref{hermo} and not the Bergman two-form \eqref{omBERG} because he was
 able to  prove the 
 correspondence principle only with the balanced  metric given by \eqref{herm}.

Strictly speaking, the construction  of the 
Bergman kernel function  was advanced  for   bounded domains $\mc{D}\subset\C^n$. Firstly the case   $n=2$ was considered
in \cite{berg1,berg2,berg3}. In general, for complex manifolds $M$,
\eqref{herm}  defines a {\it pseudometric} (in the meaning of \cite{aisk}, see
also the Appendix). 
Berezin himself has applied his construction to quantization of $\C^n$ and
to the noncompact  hermitian symmetric spaces \cite{ber73, ber74,ber75},
realized as the  classical
domains I-IV in the formulation of Hua \cite{hua}. In the construction
of Berezin, a family of Hilbert spaces $\Hi_{\lambda}:=\Hi_{\lambda f}$ indexed by a
positive number $\lambda$ is considered, where the weight $\e^{-\lambda
  f}$ ($\lambda=\frac{1}{\hbar}$) is introduced  in formula \eqref{HIF} instead of $\e^{-f}$.

The relation \eqref{scf} can be interpreted in the language of
geometric quantization \cite{Kos}, see also \cite{SBS,csg}. The
local approach of Berezin to quantization via CS was
globalized by Rawnsley, Cahen and Gutt   in a series of papers
starting with   \cite{raw,Cah,cah}.
 In their approach,  the K\"ahler manifold $M$  is
not necessarily a homogeneous one, but they have proved that the 
homogeneous case corresponds to $\epsilon(z)= ct$ in \eqref{EPSF}.

Together with the K\"ahler manifold
$(M,\omega)$, it is also  considered the triple $\sigma =(\gl,h,\nabla)$, where
$\gl$ is a holomorphic line bundle on $M$, $h$ is the hermitian metric
on $\gl$ and $\nabla$ is a connection compatible with metric and the
K\"ahler structure \cite{SBS}. With respect to a local holomorphic
frame for the line  bundle, the metric can be given as
$$h(s_1,s_2)(z)=\hat{h}(z)\bar{\hat{s}}_1(z)\hat{s}_2(z),$$
where $\hat{s}_i$ is a local representing function for the section
$s_i$, $i=1, 2$,  and $\hat{h}(z)=(e_{\bar{z}},e_{\bar{z}})^{-1}$. \eqref{scf} is the local
representation of the scalar product on the line bundle
$\sigma$. The connection $\nabla$ has the expression  $\nabla=\pa +\pa \ln \hat{h} +\bar{\pa}$. The curvature of
$\gl$ is defined as
$F(X,Y)=\nabla_X\nabla_Y-\nabla_Y\nabla_X-\nabla_{[X,Y]}$, and locally
$F=\bar{\pa}\pa\ln\hat{h}$ \cite{chern}. The K\"ahler manifold  $(M,\omega)$  is
{\it quantizable} if there exists a triple $\sigma$ such that
$F(X,Y)=-i\omega(X,Y)$ and  we have \eqref{hermo}.

The extension of the construction of the  Bergman metric  on  bounded
domains in $\C^n$ \cite{berg1,berg2,berg3}  to K\"ahler manifolds  is a subtle one and was
considered by  Kobayashi \cite{koba,koba70}. In order to do this construction, instead of the Bergman
function,  Kobayashi has considered the {\it Bergman
  kernel form},  see  \eqref{berFORM} in the \S\ref{bsmb} in Appendix. 
 We also recall
that in the \S 4 of  Ch 4 of his  monograph \cite{ps1},  entitled  ``{\it Kobayashi 
  manifolds}'', Piatetski-Shapiro
used  this term
 in order to designate a  certain class of complex
manifolds studied by Kobayashi \cite{koba}, which have the characteristics of bounded domains in
$\C^n$. 

The square of the length of a vector $X\in\C^n$, measured in this
metric at the point $z\in M$,  is \cite{dinew}
$$\tau_M^2(z,X):=\sum_{i,j=1}^nh_{i\bar{j}}(z)X_{i}\bar{X}_{j} .$$
If the length $l$ of a piecewise $C^1$-curve $\gamma:[0,1]\ni t\mapsto
\gamma(t)\in M$ is defined as 
$$l(\gamma):=\int_0^1\tau_M(\gamma(t),\gamma'(t)) \dd t,$$
then the {\it Bergman distance} between two points $z_1,z_2\in M$ is
$$\dd_B(z_1,z_2)=\text{inf}\left\{ l(\gamma):\gamma ~\text{is a
      piecewise curve s.t.} ~\gamma(0)=z_1,\gamma(1)=z_2\right\}.$$

We  denote  the {\it{ normalized Bergman kernel}}  ({\it{the two-point
function}} of $M$ \cite{ber97,SBS})   by 
\begin{equation}\label{kmic}
\kappa_M(z,\bar{z}'):=\frac{K_M(z,\bar{z}')}{\sqrt{K_M(z)K_M(z')}}=
(\tilde{e}_{\bar{z}},\tilde{e}_{\bar{z}'})=\frac{(e_{\bar{z}},e_{\bar{z}'})}{\|e_{\bar{z}}\|\|e_{\bar{z}'}\|} .
\end{equation}
Introducing in the above definition  the series expansion
\eqref{sumBERG}, with  the Cauchy - Schwartz inequality, we have that 
\begin{equation}|\kappa_M(z,\bar{z}')|\le 1, 
\text{{{~~~~~~and~~~~~~}}} \kappa_M(z,\bar{z})=1. \end{equation}
In this paper,  by  {\it  the Berezin kernel} $b_M:M\times M\rightarrow
[0,1]\in \R$ we mean:
\begin{equation}\label{berK}
b_M(z,z'):=|\kappa_M(z,\bar{z}')|^2.
\end{equation}
Note that 
\begin{equation}\label{DIA}D_M(z,z'):= -\ln b_M(z,z') = -2\ln
\left\vert(\tilde{e}_{\bar{z}},\tilde{e}_{\bar{z}'})\right\vert
\end{equation}
is {\it Calabi's diastasis} \cite{calabi} expressed via the  CS
vectors \cite{cah}.

Let $\xi:\Hi\setminus 0\rightarrow\db{P}(\Hi)$ be the  the  canonical projection 
$\xi(\mb{z})=[\mb{z}]$. The {\it Fubini-Study metric}  in the
nonhomogeneous coordinates $[z]$ is  the
hermitian metric on $\db{CP}^{\infty}$ (see \cite{koba} for details) 
\begin{equation}\label{FBST}\dd s^2|_{FS}(\mb[{z}])=
\frac{(\dd\mb{z},\dd\mb{z}) (\mb{z},\mb{z})-(\dd\mb{z},\mb{z})
  (\mb{z},\dd\mb{z})}{(\mb{z},\mb{z})^2}.
\end{equation}

The elliptic {\it Cayley distance}    \cite{Cay}  between two points in the projective Hilbert space
$\db{P}(\Hi)$ is defined as 
\begin{equation}\label{Cdis}
\dd_C([z_1],[z_2])=\arccos \frac{|(z_1,z_2)|}{\|z_1\| 
\|z_2 \| }.
\end{equation}
The  Fubini-Study metric \eqref{FBST} and the Cayley distance
\eqref{Cdis} are  independent of the homogeneous coordinates $z$
representing $[z]=\xi(z)$.

Let $M$ be a homogeneous K\"ahler manifold $M=G/H$ to which we
associate the Hilbert space of functions \fl~ with respect to the scalar
product \eqref{scf}.  We consider manifolds $M$  which are  CS-{\em
  orbits}, i.e.  which admit a holomorphic
embedding   $\iota_M : M \hookrightarrow \Phinf$ \cite{lis,neeb,sb6}.
Note that this embedding differs from the standard Kobayashi embedding
recalled in Theorem \ref{Bigth} in the Appendix.  

For  the following assertions, see  \cite{csg}:
\begin{Remark}\label{HTR}Let us suppose that the K\"ahler manifold $M$ admits a holomorphic
embedding 
\begin{equation}\label{invers}
\iota_M: M\hookrightarrow \db{CP}^{\infty},~~ \iota_M(z) =  [\varphi_0(z):\varphi_1(z):\dots ] .
\end{equation}
The balanced Hermitian metric \eqref{herm} on $M$ is
the pullback of the Fubini-Study metric \eqref{FBST} via the embedding
\eqref{invers}, i.e.:
\begin{equation}\label{KOL} 
\dd s^2_M(z)=\iota_M^*\dd s^2_{FS}(z)= \dd s^2_{FS}(\iota_M(z)).
\end{equation}  
The angle defined by the normalized Bergman kernel \eqref{kmic} can
be expressed via the embedding \eqref{invers}  as function of the Cayley
distance \eqref{Cdis}
\begin{equation}\label{c1}
\theta_M(z_1,z_2)=\arccos|\kappa_M(z_1,\bar{z}_2)|=
\arccos|(\tilde{e}_{z_1},\tilde{e}_{z_2})_M|
=d_C(\iota_M(z_1),\iota_M(z_2)).
\end{equation}

We have also the relation 
\begin{equation}\label{ddbm}\dd_B(z_1,z_2)\ge\theta_M(z_1,z_2) . 
\end{equation}
The following (Cauchy) formula is true
\begin{equation}\label{c2}
 (\tilde{e}_{z_1},\tilde{e}_{z_2})_M=(\iota_M(z_1),\iota_M(z_2))_{\db{CP}^{\infty}}.
\end{equation}
The Berezin kernel \eqref{berK} admits the geometric interpretation via the Cayley
distance as
\begin{equation}
b_M(z_1,z_2)=\cos^2d_C(\iota_M(z_1),\iota_M(z_2))=\frac{1+\cos(2d_C(\iota_M(z_1),\iota_M(z_2)))}{2} .
\end{equation}
\end{Remark}
Note that traditionally ``the Kobayashi embedding'' is realized usually
by the metric \eqref{kobaM}  obtained using the top-degree holomorphic
forms on $M$, see the Appendix.
\section{The geometry of the Siegel-Jacobi disk}\label{TSJD}
The Siegel-Jacobi disk is the 4-dimensional homogenous space 
\begin{equation}\label{mm}
\mc{D}^J_1:= H_1/\R\times
 \text{SU}(1,1)/\text{U}(1)= \C\times\mc{D}_1,
\end{equation}
 associated to the 6-dimensional
Jacobi group $G^J_1=H_1\ltimes\text{SU}(1,1)$, where $H_1$ is the
3-dimensional Heisenberg group,  and  the Siegel disk $\mc{D}_1$ is
realized as
\begin{equation}\label{SDISK}
\mc{D}_1=\{w\in\C^1 ~~\vert  ~~ |w|<1\}.
\end{equation}
It is easy to see \cite{jac1} that:
\begin{Remark}\label{rempeste}
The Bergman  kernel function  $K_k:\mc{D}_1\times\bar{\mc{D}}_1\rightarrow \C$ is
\begin{equation}\label{ker2}
K _k (w,\bar{w}'):=
(1-w\bar{w}')^{-2k}=\sum_n^{\infty} f_{nk}(w)\bar{f}_{nk}(w'),
\end{equation}
where 
\begin{equation}\label{f2}
f_{nk}(w):=
\sqrt{\frac{\Gamma (n+2k)}{n!\Gamma (2k)}}w^n, \quad w\in\mc{D}_1,
\end{equation}
and the Siegel disk $\mc{D}_1$ has the  K\"ahler two-form $\omega_k$
given by
\begin{equation}\label{mdisk}
-i \omega_k(w) =\frac{2k}{(1-w\bar{w})^2}\dd w\wedge \dd
\bar{w}, 
\end{equation}
${\emph{\text{SU}}}(1,1)$-invariant to the linear fractional  transformation 
\begin{equation}\label{dg} w_1= g\cdot w 
=\frac{a \, w+ b}{\delta}, ~ \delta=\bar{b}w+\bar{a},
{\emph{\text{SU}}}(1,1) \ni g= \left( \begin{array}{cc}a & b\\ \bar{b} &
\bar{a}\end{array}\right),~~ ~|a|^2-|b|^2=1.
\end{equation} 

The action \eqref{dg} is   transitive, and   the Siegel disk
$\mc{D}_1$ is a  K\"ahler
  homogeneous manifold.   $\mc{D}_1$  is  a symmetric space.
\end{Remark}
Let us introduce the notation $\mc{D}^J_1\ni\varsigma:=(z,w)\in\C\times\mc{D}_1$.
Following the methods of \cite{jac1}, we get  the reproducing kernel
function 
$K:\mc{D}^J_1\times  \mc{D}^J_1\rightarrow\C$,  $K_{k\mu}(\varsigma,\varsigma')
  :=  (e_{\bar{z},\bar{w}},e_{\bar{z}',\bar{w}'})$, obtained from the
  scalar product of two  CS  vectors  of the type $e_{z,w}$,  where $k=k'+\frac{1}{4}$,
   $2k'\in\Z$ and $\mu\in\R_+$:
\begin{equation}\label{ker3}
K_{k\mu}(\varsigma,\bar{\varsigma}')=
(1-{w}\bar{w}')^{-2k}\exp{\mu F(\varsigma,{\varsigma}')}, ~
F(\varsigma,\bar{\varsigma}') = 
\frac{2\bar{z}'{z}+z^2\bar{w}'+\bar{z}'^2w}{2(1-{w}\bar{w}')}.
\end{equation}
In particular, the kernel on the  diagonal, 
$K_{k\mu}(\varsigma)=(e_{\bar{z},\bar{w}},e_{\bar{z},\bar{w}})$, reads
\begin{equation}\label{hot}
K_{ k\mu}(\varsigma)=
(1-w\bar{w})^{-2k}\exp{\mu F(z,w)},~ F(z,w)=\frac{2z\bar{z}+z^2\bar{w}+\bar{z}^2w}{2(1-w\bar{w})}, 
\end{equation}
and evidently $K_{ k\mu}(\varsigma) >0,~ \forall \varsigma\in\mc{D}^J_1.$
The holomorphic, transitive and effective  
action of the Jacobi  group $G^J_1$
on the manifold $\mc{D}^J_1$ is
$(g,\alpha)\circ (z,w)\rightarrow(z_1,w_1)$, where $w_1$ is given by
\eqref{dg} and 
\begin{equation}\label{xxx}
z_1=\frac{\gamma}{\delta}, ~\gamma = z
+\alpha-\bar{\alpha}w.\end{equation}

\subsection{The symmetric Fock space}\label{SFS}
The scalar product \eqref{PAR} of 
functions from the space $\got{F}_{k\mu}=L^2_{\text{hol}}(\mc{D}^J_1,\rho_{k\mu})$
corresponding to the kernel $K_{k\mu}$ 
defined by \eqref{hot}  on the manifold (\ref{mm}) is \cite{jac1,csg}:
\begin{equation}\label{ofi}
(\phi ,\psi )_{k\mu}= \int_{\mc{D}^J_1}\!\bar{f}_{\phi}(z,w)f_{\psi}(z,w)\rho_{k\mu}
\dd \nu(z,w) ,\rho_{k\mu} = \Lambda\! (1\!-\!w\bar{w})^{2k}\!\e^{\!-\mu\frac{2|z|^2+z^2\bar{w}\!+\!\bar{z}^2w}{2(1\!-\!w\bar{w})}},
\end{equation}
\begin{equation}\label{ofi1}
\Lambda = \frac{4k-3}{2\pi^2}.
\end{equation}
The value of the $G^J_1$-invariant measure \eqref{DELNU},   obtained
in  (\ref{dnu}), is   
\begin{equation}\label{ofi3}
\dd \nu(z,w) =\mu\frac{\dd \Re w \dd\Im w}{(1-w\bar{w})^3}\dd \Re z \dd \Im z.
\end{equation}

The base of orthonormal functions attached  to  kernel \eqref{ker3} 
 defined  on the manifold $\mc{D}^J_1$ consists of  the holomorphic polynomials 
\cite{jac1}, \cite{sbcg} 
\begin{equation}\label{x4x}
f_{nk'm}(z,w)=f_{mk'}(w)\frac{P_n(\sqrt{\mu}z,w)}{\sqrt{n!}}, ~k=k'+\frac{1}{4},~2k'\in \Z_+,~\mu\in\Z_+,
\end{equation}
where the monomials  $f_{mk'}$ are defined in \eqref{f2},
while   
\begin{equation}\label{marea}
P_n(z,w)=n!\sum _{p=0}^{[\frac{n}{2}]}
(\frac{w}{2})^p\frac{z^{n-2p}}{p!(n-2p)!} .
\end{equation}
 
The series expansion \eqref{sumBERG}  of the   kernel function  \eqref{ker3} 
 reads
\begin{equation}
\label{ker31}K_{k\mu}(z,w;\bar{z}',\bar{w}')   =
\sum_{n,m=0}^{\infty}f_{nk'm}(z,w)\bar{f}_{nk'm}(z',w').
\end{equation}
We may also write down  the expression \eqref{ker31} as
\begin{equation}
\label{ker32}K_{k\mu}(z,w;\bar{z}',\bar{w}')   =
\sum_{n,m=0}^{\infty}\tilde{f}_{nkm}(z,w)\bar{\tilde{f}}_{nkm}(z',w'),
\end{equation}
where
\begin{equation}\label{ker34}
\tilde{f}_{nkm}(z,w)=a_{nkm}w^mP_n(\sqrt{\mu}z,w),
~~  a_{nkm}=\sqrt{\frac{\Gamma(m+2k-1)}{m!\Gamma(2k-1)n!}},
\end{equation}
and the orthonormality of the base can be written as (see calculation
in \cite{ber9})
\begin{equation}\label{orthJ}
(\tilde{f}_{nks},\tilde{f}_{mkr})_{k\mu}=\delta_{nm}\delta_{rs},~~
n,m,r,s\in\N,~ k>\frac{1}{2}.
\end{equation}

\subsection{Two-forms}\label{TWOf}
We  recall the  definitions of the  notions which appear in Proposition \ref{prop2}.

The {\it  Ricci form associated to the  K\"ahlerian two-form $\omega_M$} \eqref{kall}  is  (see
  p. 90 in  \cite{mor}) 
\begin{equation}\label{RICCI}
\rho_M(z):=i \sum_{\alpha,\beta=1}^n\text{Ric}_{\alpha\bar{\beta}}(z)\dd
z_{\alpha}\wedge\dd \bar{z}_{\beta}, ~ \text{Ric}_{\alpha\bar{\beta}}(z)=
 -\frac{\pa^2}{\pa z_{\alpha}\pa \bar{z} _{\beta}} \ln\mc{G}(z).
\end{equation}

The scalar curvature at a point $p\in M$ of coordinates $z$ is (see
p. 294 in \cite{koba1})
\begin{equation}\label{scc}
s_M(p):=\sum_{\alpha,{\beta}=1}^n
(h_{\alpha{\bar\beta}})^{-1}\text{Ric}_{\alpha\bar{\beta}}(z). 
\end{equation}
We use the following expression for the Laplace-Beltrami operator on
K\"ahler manifolds $M$ with the K\"ahler two-form  \eqref{kall}, cf e.g. Lemma 3
in the Appendix of \cite{ber74}:
\begin{equation}\label{LPB}
\Delta_M(z):=\sum_{\alpha,\beta=1}^n(h_{\alpha\bar{\beta}})^{-1}\frac{\pa^2}{\pa \bar{z}_{\alpha}\pa{{z}_{\beta}}}.
\end{equation}

Following \cite{koba,LU08},  let us introduce also the positive definite
(1,1)-form on $M=G/H$
\begin{equation}\label{RIC2}\tilde{\omega}_M(z):= 
i\sum_{\alpha,\beta\in\Delta_+} \tilde{h}_{\alpha\bar{\beta}} (z) \dd z_{\alpha}\wedge
\dd\bar{z}_{\beta}, ~  \tilde{h}_{\alpha\bar{\beta}} (z):= 
  (n+1)h_{\alpha\bar{\beta}} (z)-  \text{Ric}_{\alpha\bar{\beta}}(z),
\end{equation}
which is K\"ahler, with K\"ahler potential $$\tilde{f}:=\ln
(K(z)^{n+1}\mc{G}(z)).$$

\begin{Proposition}\label{prop2}
The balanced  K\"ahler two-form $\omega_{k\mu}$ on $\mc{D}^J_1$, $G^J_1$-invariant to the
action \eqref{dg}, \eqref{xxx}, can be written as:
\begin{equation}\label{aab}
-i\omega_{k\mu}(z,w) =2k\frac{\dd w \wedge \dd\bar{w}}{P^2} +
\mu\frac{\mc{A}\wedge \bar{\mc{A}}}{P},
~\mc{A}=\dd z+\bar{\eta}(z,w)\dd w, ~\eta(z,w)=\frac{z+\bar{z}w}{P},
\end{equation}
where $$P=1-w\bar{w}.$$
The balanced  hermitian metric on $\mc{D}^J_1$ corresponding to the K\"ahler two-form
\eqref{aab}
is:
 \begin{equation}\label{metric}
\dd s^2_{k\mu}(z,w)=2k\frac{\dd w \otimes \dd\bar{w}}{P^2} +
\mu\frac{\mc{A}\otimes\bar{\mc{A}}}{P}.
\end{equation}

The volume form  is:
\begin{equation}\label{dnu}
\omega_{k\mu}\wedge\omega_{k\mu} = 16k\mu (P)^{-3} \dd \Re z \wedge \dd \Im z \wedge\dd
 \Re w  \wedge \dd \Im w  ,
\end{equation}
giving for the density of the  $G^J_1$-invariant volume $\mc{G}$
\eqref{DETG} of  Siegel-Jacobi disk a value independent of $z$ \begin{equation}\mc{G}_{\mc{D}^J_1}(z,w)=\frac{2k\mu}{(1-w\bar{w})^3}.\end{equation} 
The Bergman metric \eqref{TRUB} is
\begin{equation}\label{bmetric}
\dd s^2_{\mc{B}}(z,w)_{\mc{D}^J_1}=3\frac{\dd w\otimes \dd \bar{w}}{(1-w\bar{w})^2},
\end{equation}
with the associated Bergman K\"ahler two-form \eqref{altom}
\begin{equation}\label{newkal}
\omega^1_{\mc{D}^J_1}(z,w)= 3i \frac{\dd w\wedge \dd \bar{w}}{(1-w\bar{w})^2}.
\end{equation}
The K\"ahler two-form \eqref{RIC2} for $\mc{D}^J_1$, corresponding to
the K\"ahler potential $$\tilde{f}(z,w)= 3[\mu f(z,w)-(2k+1)\ln
(1-w\bar{w})], $$ reads
\begin{equation}\label{doiD}
-i \tilde{\omega}_{\mc{D}^J_1} (z,w)= 
3 \left[(2k+1)\frac{\dd w \wedge \dd\bar{w}}{P^2} +
\mu\frac{\mc{A}\wedge \bar{\mc{A}}}{P}\right].
\end{equation}
The Ricci form \eqref{RICCI}  associated with the balanced metric
\eqref{metric} reads
\begin{equation}\label{RICJD}
\rho_{\mc{D}^J_1}(z,w) = -3i\frac{\dd w\wedge\dd \bar{w}}{(1-w\bar{w})^2},
\end{equation}
and  $\mc{D}^J_1$   is not an Einstein manifold with respect to the
balanced metric \eqref{metric}, but it is one with respect to  the
Bergman metric \eqref{bmetric}.

 The scalar curvature \eqref{scc} is  constant and    negative definite
\begin{equation}\label{sCAL}
s_{\mc{D}^J_1}(z,w)=-\frac{3}{2k}, ~~p\in \mc{D}^J_1.\end{equation}
The Laplace-Beltrami operator \eqref{LPB}  on the
Siegel-Jacobi disk has the
expression
\begin{equation}\label{LBD}
\begin{split}
\Delta_{\mc{D}^J_1}(z,w) & =\frac{P^2}{2k\mu}\left[(\frac{2k}{P}+\mu|\eta|^2)
\frac{\pa^2}{\pa z\pa\bar{z}} +
\mu (\frac{\pa^2}{\pa w \pa\bar{w}}-
\bar{\eta}\frac{\pa^2}{\pa z\pa\bar{w}}-{\eta}\frac{\pa^2}{\pa\bar{z}\pa
  w})\right]\\
~~~~~~~& = \left(
  \frac{1-w\bar{w}}{\mu}+\frac{|z+\bar{z}w|^2}{2k}\right)\frac{\pa^2}{\pa z\pa\bar{z}} 
+\\
~~~~~~~& +
\frac{1-w\bar{w}}{2k}\left[ (1-w\bar{w}) \frac{\pa^2}{\pa w
    \pa\bar{w}}- (\bar{z}+{z}\bar{w})\frac{\pa^2}{\pa z\pa\bar{w}} -({z}+\bar{z}{w})\frac{\pa^2}{\pa\bar{z}\pa
  w} \right] ,
\end{split}
\end{equation}
$G^J_1$-invariant to the action \eqref{dg}, \eqref{xxx}, i.e. 
\begin{equation}\label{invar}
\Delta_{\mc{D}^J_1}(z_1,w_1)= \Delta_{\mc{D}^J_1}(z,w).
\end{equation}
Also we have the relations:
\begin{equation}\label{estiut}
\Delta_{\mc{D}^J_1}(z,w)(\ln(\mc{G}_{\mc{D}^J_1}(z,w)))=-s_{\mc{D}^J_1}(z,w)=\frac{3}{2k}.
\end{equation}
\end{Proposition}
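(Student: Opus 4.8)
The plan is to treat this as one long but systematic computation starting from a single object: the Kähler potential
$f=\ln K_{k\mu}$ furnished by \eqref{hot}, which reads $f(z,w)=-2k\ln P+\mu F(z,w)$ with $P=1-w\bar w$. Everything in the statement is then obtained by differentiating $f$ and $\ln\mc{G}$ according to the definitions \eqref{Ter}, \eqref{DETG}, \eqref{RICCI}, \eqref{scc}, \eqref{LPB}, \eqref{RIC2}. First I would compute the four coefficients $h_{\alpha\bar\beta}=\pa^2 f/\pa z_\alpha\pa\bar z_\beta$, $\alpha,\beta\in\{z,w\}$. A direct calculation gives $h_{z\bar z}=\mu/P$ and $h_{z\bar w}=\mu\eta/P$ with $\eta=(z+\bar z w)/P$; the coefficient $h_{w\bar w}$ requires differentiating $F$ twice in $w,\bar w$ and collecting terms, and the only delicate point is recognizing that the resulting numerator collapses to $|\eta|^2P^2$, so that $h_{w\bar w}=2k/P^2+\mu|\eta|^2/P$. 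Writing $\mc{A}=\dd z+\bar\eta\,\dd w$ then packages these four entries into \eqref{metric} and \eqref{aab}.

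Next the determinant $\mc{G}=h_{z\bar z}h_{w\bar w}-|h_{z\bar w}|^2$ is immediate: the $\mu^2|\eta|^2/P^2$ contributions cancel and $\mc{G}=2k\mu/P^3$, yielding the volume form \eqref{dnu}. Since $\ln\mc{G}=\text{const}-3\ln P$ depends only on $w$, applying $i\pa\bar\pa$ together with the elementary identity $\pa_w\pa_{\bar w}\ln P=-1/P^2$ produces the Bergman form \eqref{newkal} and its (degenerate, $z$-independent) metric \eqref{bmetric}, while $\text{Ric}_{\alpha\bar\beta}=-\pa_\alpha\pa_{\bar\beta}\ln\mc{G}$ gives the Ricci form \eqref{RICJD}; note that $\rho_{\mc{D}^J_1}=-\omega^1_{\mc{D}^J_1}$ automatically, from \eqref{omBERG}. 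The form \eqref{doiD} follows by linearity of $i\pa\bar\pa$: with complex dimension $n=2$ one has $\tilde f=\ln(K^{n+1}\mc{G})=3f+\ln\mc{G}=3[\mu F-(2k+1)\ln P]+\text{const}$, so reusing the two building blocks already computed ($\mu F\mapsto \mu\,\mc{A}\wedge\bar{\mc{A}}/P$ and $-c\ln P\mapsto c\,\dd w\wedge\dd\bar w/P^2$) assembles \eqref{doiD}.

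For the curvature assertions I would compare $\rho_{\mc{D}^J_1}$ with the two Kähler forms: it is not proportional to the balanced form $\omega_{k\mu}$, since the latter carries the $z$-direction term $\mu\,\mc{A}\wedge\bar{\mc{A}}/P$ that $\rho$ lacks, so $\mc{D}^J_1$ is not Einstein for \eqref{metric}; but $\rho=-\omega^1$ is proportional to the Bergman form, which is exactly the Einstein property for \eqref{bmetric}. The scalar curvature \eqref{sCAL} comes from inverting $h$: only the entry $(h^{-1})^{w\bar w}=P^2/(2k)$ meets the sole nonzero component $\text{Ric}_{w\bar w}=-3/P^2$, giving $s=-3/(2k)$. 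Substituting all four inverse-metric entries into \eqref{LPB} gives the compact first line of \eqref{LBD}, and replacing $\eta$ by $(z+\bar z w)/P$ and expanding produces the second line. The relation \eqref{estiut} is then the general identity $\Delta_{\mc{D}^J_1}\ln\mc{G}=-s_{\mc{D}^J_1}$, read off directly by comparing \eqref{scc} and \eqref{LPB} through $\text{Ric}=-\pa\bar\pa\ln\mc{G}$. Finally the invariance \eqref{invar} holds because the Laplace–Beltrami operator of a $G$-invariant metric is $G$-invariant, and $\dd s^2_{k\mu}$ is $G^J_1$-invariant under \eqref{dg}, \eqref{xxx}; this may also be verified directly by transforming the explicit expression.

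I expect the main obstacle to be purely computational rather than conceptual: the two-$w$-derivative evaluation of $h_{w\bar w}$, where the terms must be shown to reorganize into $|\eta|^2$, and the subsequent passage from the compact $\eta$-form of the Laplacian to its fully expanded second line in \eqref{LBD}. Everything else is bookkeeping once the metric, its determinant, and $\ln\mc{G}$ are in hand.
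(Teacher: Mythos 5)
Your computational core is correct and is essentially the paper's own route: both start from the potential $f=\ln K_{k\mu}=\mu F-2k\ln P$, obtain the metric matrix \eqref{metrica} (your collapse of the numerator in the double $w$-derivative, i.e.\ $\pa_{\bar w}F=\eta^2/2$ because $z^2P+w(2z\bar z+z^2\bar w+\bar z^2 w)=(z+\bar zw)^2$, is exactly the delicate step and it works), invert it as in \eqref{hinv}, take the determinant to get $\mc{G}=2k\mu/P^3$ after the $\mu^2|\eta|^2/P^2$ cancellation, and then read off the volume form, the Bergman pseudometric \eqref{bmetric}, the Ricci form \eqref{RICJD}, the two-form \eqref{doiD} by your linear decomposition $\mu F\mapsto\mu\,\mc{A}\wedge\bar{\mc{A}}/P$, $-c\ln P\mapsto c\,\dd w\wedge\dd\bar w/P^2$, the scalar curvature via the single surviving product $(h^{-1})^{w\bar w}\,\text{Ric}_{w\bar w}=\frac{P^2}{2k}\cdot(-\frac{3}{P^2})$, and the first line of \eqref{LBD} by substituting \eqref{hinv} into \eqref{LPB}. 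Your remarks that $\rho=-\omega^1$ yields the Einstein property only for the Bergman metric, and that \eqref{estiut} is the tautology $\Delta\ln\mc{G}=-\sum(h^{-1})^{\alpha\bar\beta}\text{Ric}_{\alpha\bar\beta}=-s$ once $\text{Ric}=-\pa\bar\pa\ln\mc{G}$ is in hand, are also correct.

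The one genuine gap is the equivariance. The proposition asserts two invariance statements --- that $\omega_{k\mu}$ is $G^J_1$-invariant under \eqref{dg}, \eqref{xxx}, and the invariance \eqref{invar} of the Laplacian --- and your argument for the second (``the Laplace--Beltrami operator of a $G$-invariant metric is $G$-invariant'') presupposes the first, which your proposal never establishes: nothing in the computation of $h$ from the potential shows how $h$ transforms under the Jacobi group. As written this part is circular. The paper devotes essentially its entire displayed proof to precisely this point, verifying \eqref{invar} directly: it computes the inverse action \eqref{acinv}, the chain-rule relations \eqref{derP} with $\pa z/\pa z_1=\delta$, $\pa w/\pa w_1=\delta^2$, $\pa z/\pa w_1=\delta\vartheta$, and the key transformation identities \eqref{otherP}, namely $P'=P/|\delta|^2$ and $\eta_1=a(\eta+\alpha)+b(\bar\eta+\bar\alpha)$, and then substitutes into \eqref{LBD}. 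To close your gap you must either carry out this direct verification (the same identities \eqref{otherP} also give the invariance of \eqref{aab} itself, since $\mc{A}$ transforms by the holomorphic factor $\delta^{-1}$), or supply an a priori argument from the CS construction: under $g\in G^J_1$ the kernel satisfies $K_{k\mu}(g\cdot\varsigma)=|\lambda(g,\varsigma)|^2K_{k\mu}(\varsigma)$ with $\lambda$ holomorphic in $\varsigma$ (the multiplier of the representation), so $\pa\bar\pa\ln K_{k\mu}$ is invariant. Some such input is needed and is absent from your text; everything else in your plan matches the paper's proof step for step.
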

\begin{proof}
Most of the assertions  of  Proposition \ref{prop2}  have been  proved in
\cite{jac1,csg}. Here we emphasize some differences between the balanced
and Bergman metric on the Siegel-Jacobi disk. However, for
self-containment, we indicate the main ingredients of the proof.

We calculate the K\"ahler potential on $\mc{D}^J_1$ 
as the logarithm of the reproducing kernel 
$f(z,w)=\ln K_{k\mu}(z,w)$,  $z,w\in\C$, $|w|<1$, 
i.e.
\begin{equation}\label{keler}
f(z,w) =\mu\frac{2z\bar{z}+z^2\bar{w}+\bar{z}^2w}{2(1-w\bar{w})}
-2k\ln (1-w\bar{w}) .
\end{equation}
The balanced K\"ahler two-form $\omega$ is obtained  with 
 formulas \eqref{kall}, \eqref{Ter}, i.e. 
\begin{equation}\label{aaa}
-i \omega_{k\mu}(z,w) = h_{z\bar{z}}dz\wedge d\bar{z}+h_{z\bar{w}}dz\wedge
d\bar{w}
-h_{\bar{z}w}d\bar{z}\wedge d w +h_{w\bar{w}}dw\wedge d\bar{w} .
\end{equation}
The volume form \eqref{OMM} for $\mc{D}^J_1$ is:
\begin{equation}\label{vol1}-\omega\wedge\omega=2\left|\begin{array}{cc}
h_{z\bar{z}} & h_{z\bar{w}}\\
h_{\bar{z}w} & h_{\bar{w}w}
\end{array}\right|
\dd z\wedge \dd \bar{z}\wedge \dd w \wedge \dd \bar{w} .
\end{equation}
The matrix of the balanced metric  \eqref{herm} $h=h(\varsigma)$, 
$\varsigma:=(z,w)\in\C\times\mc{D}_1$, determined with
the K\"ahler potential \eqref{keler}, reads
\begin{equation}\label{metrica}
h(\varsigma):=\left(\begin{array}{cc}h_{z\bar{z}} & h_{z\bar{w}}\\
    \bar{h}_{z\bar{w}} & h_{w\bar{w}}\end{array}\right) =
\left(\begin{array}{cc} \frac{\mu}{P} & \mu \frac{\eta}{P} \\
\mu\frac{\bar{\eta}}{P} &
\frac{2k}{P^2}+\mu\frac{|\eta|^2}{P}\end{array}\right),  
\end{equation}
where $\eta$ is  defined  as in \cite{jac1}
\begin{equation}\label{csv}
 ~z=\eta-w\bar{\eta},\quad\text{and~~~}~~~\eta = \eta(z,w):= \frac{z+\bar{z}w}{1-w\bar{w}}.
\end{equation}

The inverse of the matrix \eqref{metrica} reads
\begin{equation}\label{hinv}
h^{-1}(\varsigma)=\frac{P^3}{2k\mu}\left(\begin{array}{cc}\frac{2k}{P^2}+\mu\frac{|\eta|^2}{P}
    & - \mu \frac{\eta}{P} \\ -\mu\frac{\bar{\eta}}{P} & \frac{\mu}{P} \end{array}\right) .
\end{equation}

The check out   the $G^J_1$-invariance of the Laplace-Beltrami operator
\eqref{LBD} is an easy calculation, but quite long. We indicate some
intermediate stages. 

The inverse  of the relations \eqref{dg}, \eqref{xxx} are 
\begin{equation}\label{acinv}
w=\frac{\bar{a}w_1-b}{-\bar{b}w_1+a},\quad z=\frac{-z_1+\alpha a +\bar{\alpha}b-w_1(\alpha\bar{b}+\bar{\alpha}\bar{a})}{{-\bar{b}w_1+a}},
\end{equation}
where $g\in \text{SU}(1,1)$, defined by \eqref{dg}, and $\alpha\in\C$ define the action of
the Jacobi group on the Siegel-Jacobi disk.
We have$$\frac{\pa}{\pa z_1}= \frac{\pa z}{\pa z_1} \frac{\pa}{\pa z} ,\quad
\frac{\pa}{\pa w_1}= \frac{\pa w}{\pa w_1}\frac{\pa}{\pa w}+ \frac{\pa
z}{\pa w_1}\frac{\pa }{\pa z},$$
where, with \eqref{dg}, \eqref{xxx}, \eqref{acinv},  we find:
$$ \frac{\pa z}{\pa z_1}=\delta;\quad \frac{\pa w}{\pa
  w_1}=\delta^2;\quad \frac{\pa z}{\pa w_1}=\delta{\vartheta},~~~ \vartheta =
\bar{\alpha}\bar{a}+\bar{b}(z+\alpha).$$
\begin{equation}\label{derP}
\begin{split}
\frac{\pa^2}{\pa z_1 \pa  \bar{z}_1} &= |\delta|^2\frac{\pa^2}{\pa  z\pa \bar{z}} ,\\
\frac{\pa^2}{\pa z_1 \pa \bar{w}_1} &=
|\delta|^2(\bar{\delta}\frac{\pa^2}{\pa  z\pa \bar{w}} +\bar{\vartheta}
\frac{\pa^2}{\pa  z\pa \bar{z}}  ),\\
\frac{\pa^2}{\pa w _1 \pa \bar{w}_1} &=|\delta|^2( |\delta|^2s
\frac{\pa^2}{\pa w  \pa \bar{w}}+\bar{\delta}\vartheta \frac{\pa^2}{\pa z
  \pa \bar{w}}+ {\delta}\bar{\vartheta} \frac{\pa^2}{\pa \bar{z}
  \pa {w}} +|\vartheta|^2 \frac{\pa^2}{\pa {z}
  \pa \bar{z}} ).
\end{split}
\end{equation}
We also find easily
\begin{equation}\label{otherP}
\begin{split}
P' &=  1-w_1\bar{w}_1=\frac{P}{|\delta|^2},\\
\eta_1 &=\frac{z_1+\bar{z}_1w_1}{P'}=a(\eta+\alpha)+b(\bar{\eta}+\bar{\alpha}).
\end{split}
\end{equation}
With \eqref{derP}, \eqref{otherP}, we check out  the invariance
\eqref{invar}  of the
Laplace-Beltrami operator   \eqref{LBD}  on the Siegel-Jacobi disk to the action of
the Jacobi group $G^J_1$.  
\end{proof}
In Remark 2 in
\cite{nou} we have already stressed  that
the K\"ahler two-form   \eqref{aab}, firstly calculated  in \cite{jac1},
is  identical with
the one  obtained by J.-H. Yang, see e.g. Theorem 1.3 in \cite{Y10},
where $A,B, w,\eta$ corresponds in our notation with respectively
$\frac{1}{2}k,\frac{1}{4}\mu ,-w,z$. Under the same correspondence,  the formula
\eqref{LBD} is a particular case of Theorem 1.4 in  \cite{Y10}. The scalar curvature  \eqref{sCAL} was previously
obtained in \cite{jae}. We recall that Theorem 2.5 in  Berezin's paper
\cite{ber74}
asserts essentially that $\Delta_M(z)(\ln(\mc{G}(z)))= ct $  for the balanced metric  plus other 3 conditions. 

\subsection{Embeddings}\label{EMSS}

We recall that the homogeneous K\"ahler manifolds  $M=G/H$  which admit an
embedding in a  projective Hilbert space  as in  Remark \ref{HTR} are called
{\it CS-manifolds}, and the corresponding  groups $G$ are called
{\it CS-groups} \cite{lis,neeb}. We particularize Remark
\ref{HTR} in the case of the Siegel-Jacobi disk and we have: 
\begin{Proposition} \label{REM9}
The Jacobi group $G^J_1$
is a CS-group and the Siegel-Jacobi disk  $\mc{D}^J_1$ is a quantizable K\"{a}hler
 CS-manifold. The Hilbert space of functions 
\fl ~ is the space $ \got{F}_{k\mu}=
L^2_{\emph{\text{hol}}}(\mc{D}^J_1,\rho_{k\mu})$ with  the scalar product
\eqref{ofi}-\eqref{ofi3}. 
The   K\"ahlerian embedding  $\iota_{\mc{D}^J_1}:\mc{D}^J_1\hookrightarrow
\db{CP}^{\infty}$  \eqref{invers}  $\iota_{\mc{D}^J_1} =[\Phi]=
[\varphi_0:\varphi_1:\dots\varphi_N: \dots ] $ is realized with  an ordered version of the base functions
$\Phi =\left\{ f_{nk'm}(z,w) \right\}$  given by \eqref{x4x}, 
 and the K\"ahler two-form  \eqref{aab} is the pullback of
the Fubini-Study K\"ahler two-form \eqref{FBST} on $\db{CP}^{\infty}$,
$$\omega_{k\mu}= \iota_{\mc{D}^J_1}^*\omega_{FS}|_{\db{CP}^{\infty}},
~\omega_{k\mu}(z,w)=\omega_{FS}([\varphi_N(z,w)]).$$ 
The normalized Bergman kernel \eqref{kmic}  $\kappa_{k\mu}$  of the Siegel-Jacobi disk 
expressed in the variables $\varsigma=(z,w)$, $\varsigma'=(z',w')$ reads
\begin{equation}\label{redBKJ}
\kappa_{k\mu}(\varsigma,\bar{\varsigma}')=\kappa_{k}(w,\bar{w}')\exp[\mu
(F(\varsigma,\bar{\varsigma}')-\frac{1}{2}(F(\varsigma)+F(\varsigma'))], 
\end{equation}
where $\kappa_{k}(w,\bar{w}')$ is the normalized Bergman kernel for
the Siegel disk $\mc{D}_1$
\begin{equation}
\kappa_{k}(w,\bar{w}')=\left[\frac{(1-|w|^2)(1-|w'|^2)}{(1-w\bar{w}')^2}\right]^k,
\end{equation}
 $F(\varsigma,\bar{\varsigma}')$ is defined in \eqref{ker3}, and
$F(\varsigma)$ is defined in \eqref{hot}. 
The Berezin  kernel of $\mc{D}^J_1$ is 
$$b_{k\mu}(\varsigma,\varsigma')=b_{k}(w,w')\exp[2\Re
F(\varsigma,\bar{\varsigma}')-F(\varsigma)- F(\varsigma')],$$
where $b_{k}(w,w')=|\kappa_{k}(w,\bar{w}')|^2$. 

With formula \eqref{DIA}, we get for the
diastasis function on the Siegel-Jacobi disk the expression:
\begin{equation}\label{dia1}
\frac{D_{k\mu}(\varsigma,\varsigma')}{2}=k
  \ln\frac{\vert 1-w\bar{w}'\vert^2}{(1-|w|^2)(1-|w'|^2)} +\mu[\frac{F(\varsigma)+F(\varsigma')}{2}-\Re F(\varsigma,\bar{\varsigma'})].
\end{equation}
\end{Proposition}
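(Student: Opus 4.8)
The plan is to read off every assertion from the general CS-manifold framework of Remark~\ref{HTR} together with the explicit reproducing kernel \eqref{ker3} and its orthonormal expansion \eqref{ker31}. First I would dispose of the structural claims. The orthonormality relation \eqref{orthJ} shows that $\{f_{nk'm}\}$ is an orthonormal base of $\got{F}_{k\mu}=L^2_{\text{hol}}(\mc{D}^J_1,\rho_{k\mu})$ for the scalar product \eqref{ofi}--\eqref{ofi3}, and that $K_{k\mu}$ is its reproducing kernel; this identifies \fl~ with $\got{F}_{k\mu}$ and specializes the abstract scalar product \eqref{scf} to \eqref{ofi}, using \eqref{ofi3} and \eqref{hot}. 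Because $K_{k\mu}(\varsigma)>0$ for all $\varsigma$ (noted after \eqref{hot}), the map $\iota_{\mc{D}^J_1}=[\varphi_0:\varphi_1:\dots]$ built from any fixed ordering of $\{f_{nk'm}\}$ is a well-defined holomorphic embedding into $\db{CP}^{\infty}$ in the sense of \eqref{invers}; this is exactly the statement that $\mc{D}^J_1$ is a CS-manifold and $G^J_1$ a CS-group, while quantizability follows from the line-bundle construction of \S\ref{gi}, the prequantum bundle with $\hat{h}=K_{k\mu}^{-1}$ having curvature $-i\omega_{k\mu}$. The pullback identity $\omega_{k\mu}=\iota^*_{\mc{D}^J_1}\omega_{FS}$ is then the specialization of \eqref{KOL} to this embedding.

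The analytic core is the computation of the normalized Bergman kernel \eqref{redBKJ}. By its definition \eqref{kmic} one has
$$\kappa_{k\mu}(\varsigma,\bar{\varsigma}')=\frac{K_{k\mu}(\varsigma,\bar{\varsigma}')}{\sqrt{K_{k\mu}(\varsigma)K_{k\mu}(\varsigma')}},$$
so I would substitute the factorized form $K_{k\mu}=(1-w\bar{w}')^{-2k}\exp[\mu F(\varsigma,\bar{\varsigma}')]$ from \eqref{ker3} together with the diagonal values \eqref{hot}. The power factors reassemble precisely into the Siegel-disk normalized kernel $\kappa_k(w,\bar{w}')=[(1-|w|^2)(1-|w'|^2)/(1-w\bar{w}')^2]^k$, while the exponentials combine into $\exp[\mu(F(\varsigma,\bar{\varsigma}')-\tfrac{1}{2}(F(\varsigma)+F(\varsigma')))]$, which is \eqref{redBKJ}.

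The last two formulas follow immediately. The Berezin kernel is $b_{k\mu}=|\kappa_{k\mu}|^2$ by \eqref{berK}; taking the modulus squared of \eqref{redBKJ} and using that the diagonal values $F(\varsigma)$, $F(\varsigma')$ of \eqref{hot} are real yields the displayed Berezin kernel, with prefactor $b_k(w,w')=|\kappa_k(w,\bar{w}')|^2$ and exponent produced by $2\Re F(\varsigma,\bar{\varsigma}')-F(\varsigma)-F(\varsigma')$. Finally Calabi's diastasis is $D_{k\mu}=-2\ln|\kappa_{k\mu}|$ by \eqref{DIA}, so $\tfrac{1}{2}D_{k\mu}=-\ln|\kappa_{k\mu}|$; separating the modulus of the power factor (which yields the $k\ln(\dots)$ term) from that of the exponential (which yields $\mu[\tfrac{1}{2}(F(\varsigma)+F(\varsigma'))-\Re F(\varsigma,\bar{\varsigma}')]$) reproduces \eqref{dia1}.

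The only genuine obstacle is bookkeeping rather than ideas: one must confirm that $F$ is real on the diagonal, so that $\tfrac{1}{2}(F(\varsigma)+F(\varsigma'))$ survives unchanged under $\Re$ and under $\ln|\cdot|$, and one must keep holomorphic and anti-holomorphic arguments apart when passing to $|\cdot|^2$. Everything else is a direct consequence of the factorization \eqref{ker3} and the general results collected in Remark~\ref{HTR}.
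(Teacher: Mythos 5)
Your computational steps are sound and coincide with what the paper (via \cite{csg}) does: the identification of \fl~ with $\got{F}_{k\mu}$ via \eqref{orthJ}, the substitution of \eqref{ker3} and \eqref{hot} into \eqref{kmic} to get \eqref{redBKJ}, the passage to the Berezin kernel using the reality of $F(\varsigma)$ (note $z^2\bar w+\bar z^2 w=2\Re(z^2\bar w)$), and the diastasis via \eqref{DIA} are all exactly the routine verifications the proposition requires. But there is one genuine gap, and it sits at the very point the paper's printed proof is actually devoted to: you infer from $K_{k\mu}(\varsigma)>0$ that $\iota_{\mc{D}^J_1}=[\varphi_0:\varphi_1:\dots]$ is ``a well-defined holomorphic embedding.'' Positivity of the kernel on the diagonal only guarantees that not all homogeneous coordinates vanish simultaneously, i.e.\ that the map into $\db{CP}^{\infty}$ is \emph{defined} at every point. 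An embedding in the standard sense recalled at the start of \S\ref{EMBB} requires in addition that the differential be nowhere degenerate (an immersion) and that the map be injective --- the analogues of conditions \AA$_2$) and \AA$_3$) --- and neither follows from positivity alone. Since the CS-manifold/CS-group assertion is precisely the embedding claim, this step cannot be waved through.

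The paper closes this gap concretely, and your argument should too: it switches to the expansion \eqref{ker32} with the base \eqref{ker34}, observes that $\tilde{f}_{0k0}=1$, $\tilde{f}_{0k1}=\sqrt{2k-1}\,w$, $\tilde{f}_{1k0}=\sqrt{\mu}\,z$ occur among the orthonormal functions (this needs $k>\frac{1}{2}$, $\mu>0$), and computes $\pa(\tilde{f}_{0k1},\tilde{f}_{1k0})/\pa(z,w)=-\sqrt{\mu(2k-1)}\neq 0$, which gives the immersion property; moreover, because $\varphi_0\equiv 1$ never vanishes, in the affine chart the image determines $(z,w)$ directly from the coordinates $\sqrt{2k-1}\,w$ and $\sqrt{\mu}\,z$, which gives injectivity. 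This is a small amount of extra work, so your proposal is repairable rather than wrong in approach, but as written the central structural claim is asserted, not proved. One further minor point in your favor: your bookkeeping makes visible that the factor $\mu$ should multiply the exponent in the displayed Berezin kernel $b_{k\mu}$ (as consistency with \eqref{dia1} confirms), which the statement omits.
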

\begin{proof}The above proposition was enunciated in  \cite{csg}. Here we use the
notion of embedding as in the standard definition recalled at the
beginning of  \S \ref{EMBB}. This approach is different from the standard Kobayashi embedding
summarized in \S \ref{KEM} of the Appendix.  We use the explicit
representation \eqref{ker32}. 
In accord with \eqref{ker34}, we have $$\tilde{f}_{0k0}=1, ~\tilde{f}_{0k1}=\sqrt{2k-1}w,
~\tilde{f}_{1k0}=\sqrt{\mu}z, $$
$$\frac{\pa(\tilde{f}_{0k1},\tilde{f}_{1k0})}{\pa(z,w)}=-\sqrt{\mu(2k-1)}\not=
0, ~~~
k>\frac{1}{2},~\mu>0,$$
and we get 
$$K_{k\mu}(z,\bar{w})=1+(2k-1)|w|^2+\mu|z|^2+\sum_{n,s\in\db{G}}|\tilde{f}_{nks}(z,w)|^2,$$
where  $\db{G}=\N\times\N \setminus\{(0,1)\cup(1,0)\}$.
\end{proof}
\section{Bergman representative coordinates on homogeneous K\"ahler manifolds}\label{BRC}
\subsection{Definition}Bergman  has introduced the representative
coordinates on   bounded domains  $\mc{D}
\subset \C^n$ \cite{berg} in
order to generalize the Riemann mapping theorem to $\C^n$, $n>1$. 
However, almost the same construction works in
the case of manifolds $M$ \cite{davi,dinew} instead of bounded domains. Berezin's approach to
quantization  \cite{ber73,ber74,ber75,berezin}
recalled  in \S \ref{gi}  was  applied to manifolds $M$ which are (symmetric)
bounded domains $\mc{D}\subset\C^n$ (in fact, hermitian symmetric
spaces) and  $\C^n$. If   the same construction is applied  to
 manifolds $M$ which are  not necessarily bounded domains, then the Hilbert
space $\fl$ \eqref{scf} usually is  replaced with the Hilbert space of
square integrable global holomorphic forms of top degree  $\gf_n(M)$
defined  by \eqref{sc3} in the Appendix. 
 In the presentation below of the Bergman representative coordinates
 we shall use  the notation for bounded  domains
 \cite{dinew} adapted for manifolds $M$.

Assume now that \eqref{hermo} defines a K\"ahlerian structure on the
complex $n$-dimensional manifold $M$. Then 
\begin{equation}\label{cdnon0}
\mc{G}(z)=\det h_{i,\bar{j}}(z)=\det \frac{\pa^2\ln( K_M(z,\bar{z}))}{\pa
  z_i{\pa}\bar{z}_j }\not= 0, \quad i,j=1,\dots,n , 
\end{equation}
and in a neighborhood
of $\zeta\in M$, the holomorphic functions of $z$
\begin{equation}\label{mumu}
\mu_j(z) = \frac{\pa}{\pa\bar{\zeta}_j}
\ln \frac{K_M(z,\bar{\zeta})}{K_M(\zeta,\bar{\zeta})},~~~~  j=1,\dots,n,\end{equation} form  a local
system of coordinates, due to the condition \eqref{cdnon0}. If we
consider the mapping $f:M\rightarrow M$ given by a holomorphic transformation
$(z,\zeta)\rightarrow (\tilde{z},\tilde{\zeta})$, then we have 
$$\mu_j(z)=\sum_{k=1}^n\tilde{\mu}_k(\tilde{z})\frac{\pa{\bar{{\tilde{\zeta}}}_k}}{\pa\bar{\zeta}_j}, ~~j=1,\dots,n,$$i.e. {\it any biholomorphic transformation of $M$ can be described
  as a linear transformation of the coordinates} $\mu_j$, called {\it
    covariant representative coordinates in} $M $, see e.g. \cite{mskw}. 
The following linear combinations of the coordinates $\mu_j$
\eqref{mumu}  are  considered
$$\left(\begin{array}{c}w_1\\ \vdots\\
    w_n\end{array}\right)=h^{-1}\left(\begin{array}{c}\mu_1\\ \vdots\\
    \mu_n\end{array}\right) .$$

{\it The Bergman representative coordinates relative to a point
$z_0$ of a homogenous K\"ahler manifold $M$} are defined by the formulae
\begin{equation}\label{brc}
w_i(z)=\sum_{j=1}^n h^{\bar{j}i}(z_0)\frac{\pa}{\pa\bar{\zeta}_j}
\ln \frac{K_M(z,\bar{\zeta})}{K_M(\zeta,\bar{\zeta})}\Bigr|_{\zeta=z_0},~  i=1,\dots,n,
\end{equation}
where $h^{\bar{j}i}(z_0)$ is the inverse of the matrix
$h_{i\bar{j}}(z_0)$ calculated with \eqref{herm} from the 
kernel function  $K_M$ \eqref{funck}. This definition differs from the standard
definition of the Bergman representative coordinates, where, instead of
the kernel function $K_M$ \eqref{funck}, it is used the Bergman kernel
of the complex manifold $\mc{B}_n(z,\bar{w})$ defined in
\eqref{FForm} and instead of the matrix $h$ of the balanced metric
\eqref{herm} it is used the Bergman tensor  \eqref{TTT}. 

The image of a 
  bounded domain $\mc{D}\subset\C^n$ by
  the mapping $RC:~(z_1,\dots,z_n)\rightarrow (w_1,\dots,w_n)$ was  called
by Bergman   {\it representative domain} of $\mc{D}$, but we shall  use
  the same denomination  also for homogeneous K\"ahler manifolds. The mapping $RC$ is generally
  holomorphic and one-to-one only  locally. The name {\it
    representative coordinates} was firstly used by Fuks \cite{FUKs1}.

We also  denote by $J(z_1,\dots,z_n)$  the matrix which determines the
change of coordinates $RC: (z_1,\dots,z_n)\rightarrow(w_1,\dots,w_n)$ from
local coordinates on $M$ to the  Bergman representative coordinates \eqref{brc}
\begin{equation}\label{numeDET}
J(z_1,\dots,z_n):=\frac{\pa (w_1, \dots,w_n)}{\pa (z_1,\dots,z_n) }.
\end{equation}
We have already underlined  that 
\begin{Remark}\label{RRE}
\begin{equation}\label{unuB}
J(z_1,\dots,z_n)\bigr|_{z=z_0}=\un,
\end{equation}
and in a neighborhood of $z_0$ the new coordinates $(w_1,\dots,w_n)$ should yield a
basis of local vector fields, i.e.
\begin{equation}\label{notZ}
\det (J(z_1,\dots,z_n))\not= 0.
\end{equation}
\end{Remark} 
In his paper \cite{lu66}, Lu Qi-Keng presented many examples of bounded
domains $\mc{D}\subset\C^n$ in which $\mc{K}_{\mc{D}}(z,\bar{w})\not= 0,
\forall~ z,w\in \mc{D}$,
see the standard definition in \S \ref{bsmb} in the Appendix. The
domains which have a zero free
 Bergman kernel are
called {\it domains satisfying the Lu Qi-Keng conjecture} or {\it Lu Qi-Keng
domains}. It was conjectured in \cite{lu66} that {\it any simply connected
domain  in $\C^n$ is a Lu Qi-Keng domain}.  Swarczy\'nski provided an
example of  an unbounded Reinhardt domain  $ \mc{D}\subset \C^2$  for
which $\mc{K}_{\mc{D}}(z,\bar{w})$ has a zero \cite{mskw1}. Later,  a bounded,
strongly pseudo-convex counterexample to Lu-Qi Keng conjecture was given by Boas \cite{bo}.

It is  known   that \cite{xu}:
\begin{Remark}\label{rempeste2}
The homogenous bounded domains are Lu Qi-Keng
  domains and consequently  the associated Bergman representative
coordinates are globally defined. 
\end{Remark}

In this paper we shall use the name {\it Lu Qi-Keng manifold} to
denote a manifold which has properties similar to the Lu Qi-Keng
domain, i.e. a homogenous K\"ahler manifold for
which \begin{equation}\label{noTT}
K_M(z,\bar{w}) :=(e_z,e_{\bar{w}})\not=
  0, ~\forall~ z,w\in M. \end{equation}
Another name for a manifold $M$ whose points $z\in M$ verify the condition
$K_M(z)\not=0$ would be {\it normal manifold}, a
denomination used by Lichnerowitz  for manifold for which
$\mc{K}_M(z,\bar{z})\not= 0$ or {\it Kobayashi
  manifold}, see \S \ref{KEM} in Appendix.

We recall that in \cite{ber97},  in the context of Perelomov's
CS,  we have called the set
$\Sigma_z:=\{w\in M|K_M(z,\bar{w})=0\}$ the {\it{polar divisor of}} $z\in M$.
We have shown that for {\it {compact homogeneous manifolds for which the
exponential  map from the Lie algebra to the Lie group equals the geodesic
exponential, and in particular for symmetric spaces, the  set $\Sigma_z$ has
the geometric significance of the  cut locus}} $\mb{CL}_z$  (see
p. 100 in \cite{koba2} for the definition of cut locus) attached to the point
$z\in M$ \cite{ber97}. Moreover,  it was  proved  in \cite{SBS} in the context of Rawnsley-Cahen-Gutt
approach to Berezin's CS 
that
if $\Sigma_z=\mb{CL}_z$, then indeed, $\Sigma_z$ is a {\it{polar
    divisor}} in the meaning  of algebraic geometry \cite{gh}. The case of
conjugate locus in $\db{CP}^{\infty}$ as polar divisor in \cite{ber97}
is considered in \cite{LU12}, but it should be taken into
consideration that in such a case the cut locus is identical with first
conjugate point \cite{cr}. 

\subsection{The simplest example: the  Siegel disk  }\label{ssimp}
In order to have a feeling of what the Bergman representative coordinates
are, we take the simplest example of the Siegel disk $\mc{D}_1$ with
the  Bergman kernel function \eqref{ker2}.

\begin{Remark}\label{RRR11} The Bergman  kernel function \eqref{ker2}
  is positive definite  and the
Siegel disk $\mc{D}_1$ is a Lu Qi-Keng domain.  The Bergman representative
coordinates on the Siegel disk are global and the $RC$-transformation is a
K\"ahler homogenous diffeomorphism. 
\end{Remark}
\begin{proof}
Indeed,
the metric matrix $h(w)= \{h_{w\bar{w}}\}$ on $\mc{D}_1$ calculated in
\eqref{mdisk} is
\begin{equation}\label{hmatrix}
h(w)=\frac{\pa^2}{\pa w \pa \bar{w}}\ln K_k(w,\bar{w})=
\frac{2k}{P^2},~
P=1-w\bar{w}.
\end{equation}
We  introduce in the formula \eqref{brc}  the data \eqref{ker2}
and  \eqref{hmatrix} and we get for the Bergman
representative coordinate on the Siegel disk $\mc{D}_1$ 
the expression
\begin{equation}\label{wz}
w_1(w)=P_0\frac{w-w_0}{P_1} , ~ P_0= 1-w_0\bar{w}_0,~P_1=1-\bar{w}_0w.
\end{equation}
The Bergman representative coordinate for the Siegel disk has the
right properties, i.e.  $w_1(w_0)=0$, and \eqref{unuB} and \eqref{notZ}
are also  verified 
because
\begin{equation}
\frac{\pa w_1}{\pa w}\bigr|_{w_1\in\mc{D}_1} =\frac{P_0}{P_1^2}\not= 0.
\end{equation}
Note that the inverse of \eqref{wz} is
\begin{equation}\label{inverswz}
w=\frac{w'_1+w_0}{1+\bar{w}_0w'_1}, \quad w'_1=\frac{w_1}{P_0},
\end{equation}
and
\begin{equation}
K_k(w_1',\bar{w}_1')=\left(\frac{1-|w'_1|^2}{|1+\bar{w}_0w'_1|^2}\right)^{-2k}.
\end{equation}
Also we obtain the expression \begin{equation}
-i \omega_k(w_1')= 2k\frac{\dd w'_1\wedge \dd
\bar{w}'_1} {(1-w'_1\bar{w}'_1)^2}.
\end{equation}
\end{proof}
\subsection{The Siegel-Jacobi disk}\label{SJDC}  
We consider the Siegel-Jacobi disk endowed with the  kernel
function \eqref{ker3}. We proof that:
\begin{Proposition}\label{PR1} The  kernel function
  of the Siegel-Jacobi disk  \eqref{ker3} is positive definite,  
$  \mc{D}^J_1$ is a normal K\"ahler homogeneous Lu Qi-Keng manifold.    The Bergman representative coordinates
    \eqref{brc} are globally defined on $\mc{D}^J_1$ .
\end{Proposition}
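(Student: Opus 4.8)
The plan is to establish the three manifold properties in the order \emph{positive definiteness}, \emph{normality}, \emph{Lu Qi-Keng}, and then to deduce the global definition of the representative coordinates as a consequence of the last one. Positive definiteness of $K_{k\mu}$ is automatic from its being a reproducing kernel: the series expansion \eqref{ker31} (equivalently \eqref{ker32}) writes $K_{k\mu}(\varsigma,\bar{\varsigma}')=\sum_{n,m}f_{nk'm}(\varsigma)\bar{f}_{nk'm}(\varsigma')$ in an orthonormal basis, so for any finite collection of points $\varsigma^{1},\dots,\varsigma^{N}\in\mc{D}^J_1$ and scalars $c_1,\dots,c_N$ the Hermitian form $\sum_{a,b}c_a\bar{c}_b K_{k\mu}(\varsigma^{a},\bar{\varsigma}^{b})=\sum_{n,m}\bigl|\sum_a c_a f_{nk'm}(\varsigma^{a})\bigr|^2$ is nonnegative.

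Next, the normal-manifold property $K_{k\mu}(\varsigma)\neq 0$ is read directly off the diagonal value \eqref{hot}: since $|w|<1$ gives $1-w\bar{w}>0$, the prefactor $(1-w\bar{w})^{-2k}$ is a positive real; moreover $F(z,w)$ is real on the diagonal (the numerator is $2|z|^2+2\Re(z^2\bar{w})$ and the denominator is real positive), so $\exp(\mu F(z,w))>0$. Hence $K_{k\mu}(\varsigma)>0$, as already observed after \eqref{hot}.

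The heart of the argument is the Lu Qi-Keng property, for which I would inspect the factorized form \eqref{ker3}, namely $K_{k\mu}(\varsigma,\bar{\varsigma}')=(1-w\bar{w}')^{-2k}\exp\bigl(\mu F(\varsigma,\bar{\varsigma}')\bigr)$. The exponential factor is nonzero for every value of the complex argument $\mu F(\varsigma,\bar{\varsigma}')$. For the remaining factor, $|w|,|w'|<1$ force $|w\bar{w}'|<1$, so $1-w\bar{w}'$ lies in the open disk $\{\,|\zeta-1|<1\,\}\subset\{\,\Re\zeta>0\,\}$; in particular $1-w\bar{w}'\neq 0$, and the principal branch of the power $(1-w\bar{w}')^{-2k}$ is a well-defined, nonvanishing holomorphic function there. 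I expect the one genuine subtlety to be exactly this branch issue, because $2k=2k'+\tfrac12$ is not an integer; the confinement of $1-w\bar{w}'$ to the right half-plane is precisely what makes the power single-valued and zero-free. Thus $K_{k\mu}(\varsigma,\bar{\varsigma}')\neq 0$ for all $\varsigma,\varsigma'\in\mc{D}^J_1$, i.e.\ $\mc{D}^J_1$ is a Lu Qi-Keng manifold.

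Finally, the global definition of the representative coordinates \eqref{brc} follows from Lu Qi-Keng: since $K_{k\mu}(\varsigma,\bar{\zeta})$ never vanishes, the expression $\frac{\pa}{\pa\bar{\zeta}_j}\ln\frac{K_{k\mu}(\varsigma,\bar{\zeta})}{K_{k\mu}(\zeta,\bar{\zeta})}=\frac{1}{K_{k\mu}(\varsigma,\bar{\zeta})}\frac{\pa K_{k\mu}}{\pa\bar{\zeta}_j}$ is single-valued and holomorphic in $\varsigma$ on all of $\mc{D}^J_1$ for each fixed base point $\zeta=z_0$, even though $\ln$ itself is only locally defined. The metric matrix $h(z_0)$ from \eqref{metrica} is invertible everywhere, its inverse being given explicitly by \eqref{hinv}, so the linear combinations in \eqref{brc} are defined on the whole of $\mc{D}^J_1$. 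It then remains only to confirm the normalization \eqref{unuB} and the nondegeneracy \eqref{notZ} of Remark \ref{RRE}, which is a direct computation paralleling the Siegel-disk case treated in Remark \ref{RRR11}.
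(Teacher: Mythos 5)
Your proposal is correct, but it proves the proposition by a genuinely different, softer route than the paper. You argue abstractly: positive definiteness from the orthonormal expansion \eqref{ker31}, normality from the diagonal formula \eqref{hot}, the Lu Qi-Keng property from the factorization \eqref{ker3} (with a careful treatment of the principal branch of $(1-w\bar{w}')^{-2k}$ for non-integer $2k$, a subtlety the paper leaves implicit), and then global definition of \eqref{brc} as a formal consequence of zero-freeness plus the everywhere-invertible metric \eqref{hinv} --- which is exactly the general principle the paper itself states for bounded domains in Remark \ref{rempeste2}. The paper's proof is instead constructive: it computes the representative coordinates in closed form \eqref{noi}--\eqref{noi1}, verifies the normalization \eqref{unuB} and the global nondegeneracy $\det J(\varsigma)=(P_0/P_1)^3\neq 0$ in \eqref{det1}, and inverts the map explicitly in \eqref{noiinvers}. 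Your approach buys brevity and makes transparent \emph{why} the coordinates are global (Lu Qi-Keng is the real mechanism), whereas the paper's computation buys the explicit formulas that are indispensable downstream: Proposition \ref{REMFF}, identifying the representative manifold with $\mc{D}^J_1$ itself, is built entirely on \eqref{noi} and \eqref{noiinvers}. One caution: your closing remark that confirming \eqref{notZ} is ``a direct computation paralleling the Siegel-disk case'' understates the work --- near $\varsigma_0$ it follows from \eqref{unuB} by continuity, but the \emph{global} nonvanishing of $\det J$ on all of $\mc{D}^J_1$ is precisely what forces the explicit computation \eqref{noi}--\eqref{det1} that constitutes the bulk of the paper's proof, so if the proposition is read as asserting that \eqref{brc} gives admissible coordinates everywhere (not merely globally single-valued functions), that step cannot be waved through.
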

\begin{proof}
Using \eqref{hinv}, we particularize  \eqref{brc} to $\mc{D}^J_1$:
\begin{eqnarray*}
w_1(\varsigma) & =  & \left( h^{\bar{1}1}(\varsigma)\frac{\pa}{\pa \bar{z}'}+
  h^{\bar{2}1}(\varsigma)\frac{\pa}{\pa \bar{w}'} \right) X, \\
w_2(\varsigma) & = &  \left( h^{\bar{1}2}(\varsigma)\frac{\pa}{\pa \bar{z}'}+
  h^{\bar{2}2}(\varsigma)\frac{\pa}{\pa \bar{w}'}\right) X, 
\end{eqnarray*}
where, with  the notation of \eqref{ker3} and  \eqref{hot}, we have
\begin{equation}
X:=X(\varsigma,\varsigma')=\ln \frac{K_{k\mu}(\varsigma,\bar{\varsigma}')}{K_{k\mu}(\varsigma')}.
\end{equation}

We find explicitly the representative Bergman coordinates \eqref{brc}
$(w_1,w_2)=RC(z,w)$ on the
Siegel-Jacobi disk $\mc{D}^J_1$
\begin{equation}\label{noi}
\begin{split}
w_1(\varsigma) & = -\bar{\eta}_0w_2+P_0(\eta_1'-\eta_0), \\
w_2(\varsigma) & = \frac{P_0}{P_1}(w-w_0)+\lambda\left[ P_0(\eta_1'-\eta_0)\right]^2, 
\end{split}
\end{equation}
where
\begin{equation}\label{noi1}
\eta_0 : =\eta(z_0,w_0)=\frac{z_0+\bar{z}_0w_0}{P_0}, ~~
\eta_1'=\frac{z+\bar{z}_0w}{P_1} ,~~ \lambda= \frac{\mu}{4k}.
\end{equation}

We have $$(w_1(\varsigma_0),w_2(\varsigma_0))  = (0,0).$$ Note that the Bergman representative coordinates $(w_1, w_2)$
\eqref{noi}, \eqref{noi1} verify the condition \eqref{numeDET}, i.e.
\begin{equation}
J(\varsigma)\bigr|_{\varsigma=\varsigma_0}=\mathbb{1}_2.
\end{equation}
We have also 
\begin{equation}\label{det1}\det J(\varsigma)
\bigr|_{\varsigma\in\mc{D}^J_1}=\left(\frac{P_0}{P_1}\right)^3\not=
0. \end{equation}

Now we reverse  equation \eqref{noi} and we express $\varsigma=(z,w)\in\C\times\mc{D}_1$
of a point in $\mc{D}^J_1$ as function of  the Bergman representative 
coordinates $(w_1,w_2)$ and we get: 
\begin{subequations}\label{noiinvers}
\begin{align}
w & =\frac{y+w_0}{Q},\\
z & =
\frac{z_0-\bar{z}_0y+P_0x}{Q},
\end{align}
\end{subequations}
where
\begin{equation} \label{toatev}
Q = 1+\bar{w}_0y, \quad y =w_2'-\lambda P_0x^2 ,\quad
x=w_1'+\bar{\eta}_0w_2';\quad w_i'=\frac{w_i}{P_0},\quad i=1,2 .
\end{equation}
If we put in \eqref{noiinvers} $(w_1,w_2)=(0,0)$, then
$(w,z)=(w_0,z_0)$.
\end{proof}

Now we express the K\"ahler two-form  \eqref{aab} on the Siegel disk
$\mc{D}^J_1$ in the
Bergman representative coordinates $(w_1,w_2)$ \eqref{noi}. 
The  reverse to  the transformation  $(x,y)\rightarrow (z,w)$ given by
equations \eqref{noiinvers} is
\begin{equation}\label{cool}
x=\frac{z-\eta_0 +\bar{\eta}_0w}{P_1}, \quad y= \frac{w-w_0}{P_1} .
\end{equation} 

We shall  prove that
\begin{Proposition}\label{REMFF} The $RC$-transformation  \eqref{cool}
  $(w,z)\rightarrow(x,y)$ is a
  biholomorphic  mapping  of the Siegel-Jacobi disk to itself. The K\"ahler two-form  $\omega_{k\mu}(z,w) $ of the Siegel-Jacobi disk
  expressed in the variables $(x,y)$  from
  \eqref{noiinvers} has a
value similar to \eqref{aab}:
\begin{equation}\label{newK}
-i\omega_{k\mu}(x,y)=2k\frac{\dd y\wedge \dd
\bar{y}}{(1-|y|^2)^2}+\mu\frac{\mc{B}\wedge\bar{\mc{B}}}{1-|y|^2},
\end{equation}
where the one-form $\mc{B}$ is defined in \eqref{al4}, \eqref{al5}. The action of an element $(l,u)\in {\emph{\text{SU}}}(1,1)\times\C$ on
$(x,y)\in\mc{D}^J_1$,
$(l,u)\circ (x,y)=(x',y')$, where
\begin{equation}\label{srsr}
l=\left(\begin{array}{cc} r & s\\
\bar{s} & \bar{r}\end{array}\right),\quad |r|^2-|s|^2=1, 
\end{equation}
is similar to the action  \eqref{dg}, \eqref{xxx}, i.e.
\begin{equation}\label{xxx1}
x'=\frac{x+u-\bar{u}y}{\bar{s}y+\bar{r}},\quad
y'=\frac{ry+s}{\bar{s}y+\bar{r}},
\end{equation}
where the parameters of the matrix $l\in\emph{\text{SU}}(1,1)$ and $u\in\C$
are expressed as function of the matrix  $g \in {\emph{\text{SU}}}(1,1)$ \eqref{dg} and $\alpha\in\C$
describing the action $(g,\alpha)\circ(w,z)=(w_1,z_1)$ in \eqref{dg},
\eqref{xxx}:
\begin{subequations}
\begin{align}
r & =\frac{a-\bar{b}w_0+\bar{w}_0(b-w_0\bar{a})}{P_0}, \\
s & = \frac{b+w_0(a-\bar{a}-{w}_0\bar{b})}{P_0},\\
u &
=\frac{z_0+\alpha-w_0\bar{\alpha}-{\eta}_0(\bar{a}+w_0\bar{b})+\bar{\eta}_0(b+w_0a)}{P_0} .
\end{align}
\end{subequations}
The K\"ahler two-form \eqref{newK} $\omega_{k\mu}(x,y)$ is K\"ahler
homogenous, i.e. it is invariant to the action \eqref{xxx1}. The 
$RC$-transformation  \eqref{cool} for the Siegel-Jacobi disk is a
homogenous  K\"ahler
diffeomorphism and the
representative manifold of the Siegel-Jacobi disk is the Siegel-Jacobi
disk itself.

We express the resolution of unit  on the Siegel-Jacobi disk \eqref{ofi} in the variables
$(x,y)$ related to the variables $(z,w)$ by \eqref{noiinvers}. The
measure \eqref{ofi3}, $G^J_1$-invariant to the action \eqref{xxx1}, has the expression:
\begin{equation}\label{newm}
d \nu(x,y) =\mu\frac{\dd \Re y\dd\Im y}{(1-y\bar{y})^3}\dd \Re x \dd \Im x.
\end{equation}

We express the reproducing kernel \eqref{hot} in the variables
$(x,y)$ given by \eqref{toatev}. For $P$ we have the expression
\eqref{al1}, while for  $F(x,y)$ we have the expression:
\begin{align*}
2(1-|y|^2)|Q|^2 F(x,y)   & = |y|^4(w_0\bar{\eta}_0\bar{z}_0 + cc)+
|y|^2(yx_1+cc)+P_0(\bar{w}_0\bar{x}^2y^2+cc)\\
 & 
 +y[-x_1+P_0\bar{x}(2\bar{w}_0x+\bar{x}(1+|w_0|^2)] +cc\\
&  + |y|^2[|z_0|^2-\eta_0(\bar{z}_0+P_o\bar{x})-cc] + \eta_0+x(1+w_0) + cc, \\
& {\emph{\text{where}}} \quad x_1=\bar{z}_0^2-2\bar{w}_0\eta_0P_0\bar{x}. 
\end{align*}
\end{Proposition}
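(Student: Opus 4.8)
The plan is to identify the $RC$-transformation \eqref{cool} with a genuine motion of the Jacobi group followed by a fibre dilation, and then to lean on the $G^J_1$-invariance of the balanced metric proved in Proposition \ref{prop2}. First I would verify that $\Psi\colon(z,w)\mapsto(x,y)$ is a biholomorphism of $\mc{D}^J_1$ onto itself. The component $y=(w-w_0)/P_1$ is the $\text{SU}(1,1)$ fractional transformation \eqref{dg} with $a=1/\sqrt{P_0}$, $b=-w_0/\sqrt{P_0}$, hence a biholomorphism of $\mc{D}_1$ carrying $w_0$ to $0$; for each fixed $w$ the component $x=(z-\eta_0+\bar\eta_0 w)/P_1$ is affine in $z$ with nonzero coefficient $1/P_1$, so $x$ ranges over all of $\C$. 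Thus $\Psi$ maps $\mc{D}^J_1=\C\times\mc{D}_1$ onto itself, and the explicit holomorphic inverse \eqref{noiinvers}, well defined because $Q=1+\bar w_0 y\neq 0$ on $\mc{D}_1$, shows $\Psi$ is biholomorphic.

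For the K\"ahler two-form I would handle the two summands of \eqref{aab} separately. The disk summand $2k\,\dd w\wedge\dd\bar w/P^2$ is invariant under the $\text{SU}(1,1)$ map $w\mapsto y$, so it becomes $2k\,\dd y\wedge\dd\bar y/(1-|y|^2)^2$ verbatim. For the fibre summand I would use the factorization $\Psi=D\circ\phi$, where $\phi=(g,\alpha)$ is the Jacobi element with the $g$ of the first paragraph and $\alpha=-\eta_0$ (which sends $(z_0,w_0)$ to $(0,0)$, as one checks from \eqref{csv}) and $D$ is the fibre dilation $z_1\mapsto x=z_1/\sqrt{P_0}$. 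By the $G^J_1$-invariance of \eqref{metric}, under $\phi$ the fibre summand equals its counterpart $\mu\,\mc{A}_1\wedge\bar{\mc{A}}_1/(1-|w_1|^2)$ with $\mc{A}_1=\dd z_1+\bar\eta(z_1,w_1)\,\dd w_1$; applying $D$ and using $\eta(\sqrt{P_0}\,x,y)=\sqrt{P_0}\,\eta(x,y)$ gives $\mc{A}_1=\sqrt{P_0}\,(\dd x+\bar\eta(x,y)\,\dd y)$. Hence, defining $\mc{B}$ as in \eqref{al4}, \eqref{al5} so as to absorb the factor $\sqrt{P_0}$, the fibre summand becomes $\mu\,\mc{B}\wedge\bar{\mc{B}}/(1-|y|^2)$, which is \eqref{newK}; it is the placement of $\sqrt{P_0}$ inside $\mc{B}$ that keeps the coefficient equal to $\mu$ rather than $\mu P_0$.

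To obtain the transported homogeneity I would conjugate the Jacobi action by $\Psi$, i.e. compute $\Psi\circ\big((g,\alpha)\circ(\,\cdot\,)\big)\circ\Psi^{-1}$ on $(x,y)$ from \eqref{dg}, \eqref{xxx}, \eqref{cool} and \eqref{noiinvers}. Because $\Psi$ merely recombines the Heisenberg and $\text{SU}(1,1)$ variables and rescales the fibre, the composite is again an $\text{SU}(1,1)\times\C$ motion of the shape \eqref{xxx1}, and reading off its entries gives the stated $(r,s,u)$ in terms of $(a,b,\alpha)$ and the base point $(z_0,w_0)$. The invariance of \eqref{newK} under \eqref{xxx1} is then immediate, since \eqref{newK} has exactly the structural form of \eqref{aab} and \eqref{xxx1} that of \eqref{dg}, \eqref{xxx}, so the verification is the same computation already carried out in Proposition \ref{prop2}. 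Since $\Psi$ is a biholomorphism preserving both the K\"ahler two-form and the homogeneity, it is a homogeneous K\"ahler diffeomorphism, and the representative manifold, being the image $\Psi(\mc{D}^J_1)=\mc{D}^J_1$, coincides with the Siegel-Jacobi disk.

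It remains to transport the resolution of unit and the kernel. Because the two-form keeps the form \eqref{newK}, the density $\mc{G}$ and hence the invariant measure \eqref{ofi3} retain their shape, yielding \eqref{newm}; alternatively this follows from a direct Jacobian computation for $\Psi$. The formula for $F(x,y)$ I would obtain by substituting \eqref{noiinvers}, \eqref{toatev} into the kernel \eqref{hot} and clearing the denominators $P_0$, $P_1$, $Q=P_0/P_1$. I expect the principal obstacle to be precisely these last algebraic reductions together with the explicit identification of $(r,s,u)$: both are elementary but long, and they demand careful bookkeeping of the recurring quantities $P_0=1-|w_0|^2$, $P_1=1-\bar w_0 w$ and $\eta_0$, while the correct placement of the normalizing factor $\sqrt{P_0}$ inside $\mc{B}$ is what makes the coefficients in \eqref{newK} and \eqref{newm} come out as $\mu$.
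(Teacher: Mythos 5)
Your proposal is correct in substance but takes a genuinely different route from the paper. The paper's proof is a direct change of variables: it substitutes \eqref{noiinvers} into \eqref{aab} and establishes the pullback identities \eqref{toatealea} --- namely $P=P_0(1-|y|^2)/|Q|^2$, $\dd w=P_0\dd y/Q^2$, the expression \eqref{al3} for $\dd z$, $\eta(z,w)=\eta_0+(B+w_0\bar{B})/(1-|y|^2)$ and $\mc{A}=P_0\mc{B}/Q$ --- from which \eqref{newK} is read off; it deduces $w\in\mc{D}_1\Leftrightarrow y\in\mc{D}_1$ from \eqref{al1}, gets the biholomorphy from the nonvanishing Jacobians along the chain $(z,w)\rightarrow(w_1,w_2)\rightarrow(y,x)$ using \eqref{det1}, and obtains the measure \eqref{newm} from \eqref{al1}--\eqref{al3}; the formulas for $(r,s,u)$ and the invariance of \eqref{newK} under \eqref{xxx1} are stated without a displayed derivation. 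You instead factor the $RC$-map as $\Psi=D\circ\phi_0$ with $\phi_0=(g_0,-\eta_0)\in G^J_1$, $g_0=P_0^{-1/2}\left(\begin{smallmatrix}1 & -w_0\\ -\bar{w}_0 & 1\end{smallmatrix}\right)$, and $D$ the real fibre dilation $z_1\mapsto z_1/\sqrt{P_0}$, and then lean on the $G^J_1$-invariance already proved in Proposition \ref{prop2}. This is more structural and buys two things the paper leaves implicit: it explains why the conjugated action \eqref{xxx1} is again of Jacobi type (conjugation by $D$ preserves the form $z\mapsto(z+u-\bar{u}w)/\delta$ precisely because $1/\sqrt{P_0}$ is real), and it reduces the invariance of \eqref{newK} to the computation already done in Proposition \ref{prop2}. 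Your recipe does reproduce the stated parameters: $g_0gg_0^{-1}$ gives exactly the $r$ and $s$ of the proposition, and evaluating $\Psi\circ(g,\alpha)=(l,u)\circ\Psi$ at $(z_0,w_0)$ gives the stated $u$.

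One point needs to be made explicit. Your step of ``absorbing $\sqrt{P_0}$ into $\mc{B}$'' conflicts with \eqref{al5} as printed, where $\mc{B}=\dd x+\bar{\eta}(x,y)\dd y$ carries no such factor and $\mc{A}=P_0\mc{B}/Q$. With that printed $\mc{B}$, both your factorization (via $\mc{A}_1=\sqrt{P_0}\,\mc{B}$ and $1-|w_1|^2=1-|y|^2$) and the paper's direct substitution (via $\mc{A}\wedge\bar{\mc{A}}/P=P_0\,\mc{B}\wedge\bar{\mc{B}}/(1-|y|^2)$) yield $\mu P_0$ rather than $\mu$ as the coefficient of the second term of \eqref{newK}, and a direct Jacobian computation likewise produces an extra $P_0$ in \eqref{newm}; so the coefficients come out as $\mu$ only if $\mc{B}$ is renormalized by $\sqrt{P_0}$ as you do, in which case the relation in \eqref{al5} must be read as $\mc{A}=\sqrt{P_0}\,\mc{B}/Q$. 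You have thus correctly isolated the one delicate normalization (a tension internal to the paper's own formulas \eqref{al5}, \eqref{newK}, \eqref{newm}); just state explicitly that your $\mc{B}$ differs from the printed \eqref{al5} by the factor $\sqrt{P_0}$, rather than calling it ``as in \eqref{al4}, \eqref{al5}''. With that caveat recorded, the remaining steps (the biholomorphism via M\"obius-plus-affine structure with $Q\neq 0$, and the final algebra for $F(x,y)$, which the paper also does not display) are sound.
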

\begin{proof}

 In order to do this calculation,
we use the values of $(z,w)$ from \eqref{noiinvers} and we have
\begin{subequations}\label{toatealea}
\begin{align}
P & =1-w\bar{w}=P_0\frac{1-|y|^2}{|Q|^2},\label{al1} \\
\dd w & = P_0\frac{\dd y}{Q^2}, \label{al2}\\
\dd z & = P_0\frac{-(\bar{\eta}_0+\bar{w}_0x)\dd y+Q\dd x
  }{Q^2}, \label{al3}\\
\eta(z,w) & = \eta_0 +\frac{B +w_0\bar{B}}{1-|y|^2},~
B:= x+\bar{x}y, \label{al4}\\
\mc{A} & = P_0\frac{\mc{B}}{Q}, ~ \mc{B}=\dd x+ \bar{\eta}(x,y)\dd y,
~\eta(x,y)=\frac{B}{1-|y|^2}.
\label{al5}
\end{align}
\end{subequations}
  From \eqref{al1},  we find that $w\in\mc{D}_1\Longleftrightarrow
y\in\mc{D}_1$, i.e.
$$1-|w|^2> 0\quad\Longleftrightarrow\quad 1-|y|^2 >0.$$

Note that the change of coordinates $(z,w)\rightarrow
(w_1,w_2)\rightarrow
(y,x)$ is well defined on $\mc{D}^J_1$ because 
$$\frac{\pa(y,x)}{\pa (z,w)}=\frac{\pa(y,x)}{\pa (w_1,w_2)}\frac{\pa(w_1,w_2)}{\pa (z,w)},$$
and
$$\det \frac{\pa (y,x)}{\pa (w_1,w_2)}=\frac{1}{P_0},$$
so,   with \eqref{det1},  we have
 $$\det \frac{\pa (y,x)}{\pa
   (z,w)}\bigr|_{\mc{D}^J_1}=\frac{P_0^2}{P_1^3} \not= 0.$$
Taking
into account \eqref{al1}, \eqref{al2} and \eqref{al3}, we get for the
measure \eqref{ofi3} the expression \eqref{newm}. 
\end{proof}

\section{Appendix}

\subsection{Bergman pseudometric and  Bergman metric}\label{bsmb}

Let $M$ be a complex $n$ -dimensional  manifold.
We  consider three Hilbert spaces, denoted \fl, $\gf_2(M)$ and
$\gf_n(M)$, and, under some conditions, we shall establish 
correspondences  between them. 

Let $M=G/H$ be a homogenous K\"ahler manifold. We consider  the Hilbert
space \fl~   with scalar product \eqref{scf}, a particular realization
of weighted Hilbert space $\Hi_f$ \eqref{HIF}  corresponding to a
constant 
$\epsilon$-function  \eqref{EPSF}, endowed with the base of
functions $\{\varphi_i\}_{i=0,1,\dots,}$ verifying \eqref{functphi}. 
To the K\"ahler potential $f=\ln(K_M)$ we
associate the balanced  metric \eqref{herm}.

For a $n$-dimensional complex manifold $M$, let us denote by  $\gf_2(M)$  the Hilbert space of square integrable functions with respect
to the scalar product
\begin{equation}\label{sc2f}
(f,g)_{\gf_2(M)}=i^{n^2} \int_M\bar{f}(z)g(z) \dd z\wedge\dd \bar{z}
= \int_M\bar{f}(z)g(z) \dd V,
\end{equation}
where we have introduced the abbreviated notation $\dd z= \dd z_1\wedge\dots\wedge \dd
z_n$ and $\dd V$ has the expression  given in \eqref{OMMF}. This space
was considered by Stefan Bergman for bounded domains
$\mc{D}\subset\C^n$ \cite{berg,berg1,berg2}.

Let $\{ \Phi\}_{i=0,1,\dots}$ be a orthonormal basis of
$\gf_2(M)$. Then to  the {\it Bergman kernel function} 
\begin{equation}\label{BERGF}
\mc{B}_2(z,\bar{w})=\sum_{i=0}^{\infty}\Phi_i(z)\bar{\Phi}_i(w),
\end{equation}
 {\it  it is associated the Bergman
  metric}
\begin{equation}\label{berg11}
\dd s^2_{\mc{B}_2} (z)=\sum_{i,j=1}^n\frac{\pa^2 \ln \mc{B}_2(z,\bar{z})}{\pa
    z_i\pa\bar{z}_j}\dd z_i \otimes\dd \bar{z}_j.
\end{equation}

Let  now denote by $\gf_n(M)$   the set of square integrable holomorphic $n-$forms
on the complex $n$-dimensional manifold $M$, i.e.  
\begin{equation}\label{FNN} i^{3n^2}  \int_M \bar{f}\wedge f <\infty
  . \end{equation}
This space was considered by  Weil \cite{AWeil}, Kobayashi
\cite{koba,koba70},  Lichnerowicz \cite{Lich}, while for homogeneous
manifolds, see Koszul \cite{JLK} and Piatetski-Shapiro
\cite{ps1,ps2}.  
{\it The vector space   $\gf_n(M)$  is a separable complex Hilbert space with
countable base}  (cf  Corollaries at p. 60 in \cite{AWeil})     and if we
write $ f=f^*\dd z$, $g=g^*\dd z$, then the Hilbert space 
has  the  inner product
given by
\begin{equation}\label{sc3}(f,g)_{\gf_n(M)}=i^{3n^2} \int_M \bar{f}
  \wedge g =  \int_M \bar{f^*}g^*\dd V , \end{equation}
which {\it is invariant  to  the action of holomorphic transformations of}
$M$ (cf the Theorem at p. 353 in \cite{Lich}).

Let $f_0(z), f_1(z), \dots$ be a complete orthonormal basis of the
space   $\gf_n(M)$  of square integrable holomorphic $n$-forms on
$M$. 
 It can be shown (cf  Theorem 1 at  p. 357 in
\cite{Lich}) that  the series 
$$\sum_{k=0}^{\infty}f_k(z)\wedge \bar{f}_k(w) $$ {\it is absolutely
convergent and defines  a form  $\mc{K}_M(z,\bar{w})$  holomorphic   in $z$ and $\bar{w}$}, 
\begin{equation}\label{berFORM}
\mc{K}_M(z,\bar{w})=\sum_{k=0}^{\infty}f_k(z)\wedge \bar{f}_k(w),\end{equation}
  called the {\it Bergman kernel form
  of the manifold} $M$, {\it which is    independent of the
base  and is invariant under the group of
  holomorphic transformations of $M$} \cite{koba,Lich}. 
 
For $f_i(z)\in\gf_n(M)$, let us denote by  $f^*_i(z)$ the holomorphic
function  defined locally  such that 
 \begin{equation}\label{cele2}f_i(z)=f^*_i(z)\dd z.
\end{equation}
Then we have the relation 
\begin{equation}\label{FForm}\mc{K}_M(z,\bar{w})=\mc{B}_n(z,\bar{w})
  \dd z \wedge \dd \bar{w},\end{equation}
where {\it the Bergman kernel $\mc{B}_n(z,\bar{w}) $  of the complex manifold $M$} (as it is
  called by Lichnerowicz  in \cite{Lich})
has the series expansion \eqref{sumBERG1}
\begin{equation}\label{sumBERG1}
\mc{B}_n(z,\bar{w})=\sum_{i=0}^{\infty} f^*_i(z)\bar{f}^*_i(w). 
\end{equation}

A complex analytic manifold $M$ for which $ \mc{K}_M(z,\bar{z})$ {\it{is
different of zero in every point of}}  $z\in M$ is 
called by Lichnerowicz  {\it  normal manifold}  (cf p. 367 in \cite{Lich}). The
covariant symmetric tensor $t$ of type $(1,1)$,  called {\it the Bergman
tensor of the complex normal manifold $M$}, has locally the components 
\begin{equation}\label{TTT}
t_{\alpha\bar{\beta}}=\frac{\pa^2}{\pa z _{\alpha}\pa
  \bar{z}_{\beta}}\ln (\mc{B}_n(z,\bar{z})), ~\alpha,\beta =1,\dots,n.
\end{equation}
{\it The Bergman tensor of a complex  normal  manifold $M$ is
  invariant under all holomorphic  transformations of the manifold}
$M$ (cf the Theorem at p. 369 in \cite{Lich}).

Assume  that we are in the points $z\in M'\subset M$ where
$\mc{B}_n(z,\bar{z}) >0$ and let us define the Bergman pseudometric (in the
sense of \cite{aisk})
\begin{equation}\label{kobaM}
\dd  s^2_{\mc{B}_n}(z)=\sum_{i,j=1}^nt_{i\bar{j}} \dd z_i \otimes \dd\overline{z}_j.
\end{equation}

Following  \S 5 and \S 8 in \cite{Lich}, for every $z\in M$, let us
introduce the subspace of $\gf_n(M)$
$$\gf'_n(z)=\left\{\gamma\in \gf_n(M)|\gamma(z)=0\right\}.$$
Let $\alpha_0\in\gf_n(M)$ such that
\begin{equation}\label{alph0}
\alpha_0(z)\not= 0,~~(\alpha_0,\alpha_0)_{\gf_n(M)}=1,~~ (\alpha_0,\gamma) _{\gf_n(M)}=0,
~~\forall \gamma\in\gf'_n(z).\end{equation}
{\it An adopted orthonormal basis} of $\gf_n(M)$ is a basis $\alpha_0,
\alpha_1,...$  such that $\alpha_0$ verifies \eqref{alph0}  and
$\alpha_i(z)=0$ for
$i>0$. Then $$\mc{K}_n(z,\bar{z})=\alpha_0(z)\wedge\bar{\alpha}_0(z),~~
\mc{B}_n(z,\bar{z})=|\alpha_0(z)|^2>0
\quad{\text{if~~~}}\gf'_n(z)\not=\gf_n(M),$$
and we get
$$\dd
s^2_{\mc{B}_n}(z)=\frac{1}{\mc{B}_n(z,\bar{z})}\sum_{k=1}^{\infty}|\dd\alpha_k(z)|^2>0, $$
i.e. $\dd s^2_{\mc{B}_n}(z)$ is positive definite in $M'$.

 In fact, we have
the assertions:
{\it The  quadratic form \eqref{kobaM} is positive semi-definite 
   in the points of $M'\subset M$ and invariant under holomorphic
   transformations in $M$
and K\"ahlerian. For Kobayashi manifolds the Bergman metric
\eqref{kobaM} is positive definite.  Bounded
domains in $\C^n$ have positive  definite Bergman metric},  see  
Theorem 3.1 in \cite{koba},  Ch III
in  \cite{AWeil}  and \S 3.2 in  \cite{tri}. 

Now we establish correspondences  of the basis of the  Hilbert spaces denoted \fl, $\gf_2(M)$, and
$\gf_n(M)$, comparing respectively the formulae \eqref{scf},
\eqref{sc2f} and \eqref{sc3}, in the situation that this is
possible.
\begin{Proposition}\label{BIGCOR}
Let $M$ be a $n$-dimensional complex manifold $M$.  We have  the identity
\begin{equation}
\mc{B}_n(z,\bar{w})=\mc{B}(z,\bar{w}), {\emph{\text{~where~~}}}  \mc{B}(z,\bar{w}):=\mc{B}_2(z,\bar{w}).
\end{equation}

The Bergman metrics \eqref{berg11} and \eqref{kobaM} coincides
\begin{equation}\label{TRUB}
\dd s_{\mc{B}_2}^2 = \dd s_{\mc{B}_n}^2  = \dd
s_{\mc{B}}^2, \text{{\emph{~where~}}}\dd
s_{\mc{B}}^2= \sum_{i,j=1}^n\frac{\pa^2\mc{B}(z,\bar{z})}{\pa z_i\pa
  \bar{z}_j}\dd z_i \otimes \dd \bar{z}_j,
\end{equation}
but they are different from the metric $\dd s^2_M(z)$ given by \eqref{herm}.

Let now $M=G/H$ be a  homogeneous K\"ahler manifold. We have also the correspondence of the base functions \eqref{cele2}
$f_i\leftrightarrow f^*_i$ defining $\gf_n(M)$, $\Phi_i$ defining
$\gf_2(M)$, and $\varphi_i$ defining \fl
\begin{equation}\label{cor} 
f^*_i\leftrightarrow   \Phi_i \leftrightarrow   
 ~\sqrt{\Upsilon} \varphi_i,~~i=0,1, \dots,  ~~~~ ~~~~~\Upsilon=\frac{\mc{G}(z)}{K_M(z)}.
\end{equation}

   On homogeneous manifolds,   
the Bergman $2n$-form \eqref{FForm} can be expressed in function
     of the normalized Bergman kernel \eqref{kmic}  by the
     formula  
\begin{equation}\label{BQQ}
\mc{B}_n(z,\bar{w})={\sqrt{\Upsilon(z){\Upsilon}(w)}} K_M(z,\bar{w})= \sqrt{\mc{G}(z){\mc{G}}(w)}\kappa_M(z,\bar{w}).\end{equation}  
In particular, we have the relations:
\begin{equation}\label{BQQPART}
 \mc{B}_n(z,\bar{z})\equiv 
\mc{G}(z).
\end{equation} 
If $\omega^1_{M}$ is the K\"ahler two-form  associated to the
K\"ahler potential    $\mc{B}(z,\bar{z})$, then  
\begin{equation}\label{altom}
 \omega^1_{M}= i\pa\bar{\pa}\ln(\mc{G})=-\rho_M,\end{equation} 
where $\rho_M$ denotes the Ricci form \eqref{RICCI}
and the homogenous manifold $M$ is Einstein with respect to the Bergman metric
$\dd s^2_{\mc{B}}$.                         
\end{Proposition}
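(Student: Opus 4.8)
The plan is to establish the several assertions in turn, the unifying observation being that the three Hilbert spaces \fl, $\gf_2(M)$ and $\gf_n(M)$ differ only through the weight occurring in their inner products \eqref{scf}, \eqref{sc2f} and \eqref{sc3}. \emph{Step 1: the identity $\mc{B}_n=\mc{B}_2$ and the coincidence of the metrics.} I would first observe that the map $f=f^*\dd z\mapsto f^*$ of \eqref{cele2} is a linear isometry of $\gf_n(M)$ onto $\gf_2(M)$, since comparison of \eqref{sc3} with \eqref{sc2f} gives $(f,g)_{\gf_n(M)}=\int_M\bar{f^*}g^*\dd V=(f^*,g^*)_{\gf_2(M)}$ and every square integrable holomorphic function is the coefficient of a unique such $n$-form. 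Hence an orthonormal basis $\{f_i\}$ of $\gf_n(M)$ produces the orthonormal basis $\{\Phi_i:=f^*_i\}$ of $\gf_2(M)$, and the series \eqref{sumBERG1} and \eqref{BERGF} coincide term by term, giving $\mc{B}_n=\mc{B}_2=:\mc{B}$. The equality \eqref{TRUB} of the metrics \eqref{berg11} and \eqref{kobaM} is then immediate, both being the complex Hessian of $\ln\mc{B}(z,\bar{z})$; that this is distinct from the balanced metric \eqref{herm} (which is the Hessian of $\ln K_M$) is exhibited by the explicit formulae on $\mc{D}^J_1$, where \eqref{bmetric} and \eqref{metric} manifestly disagree.

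\emph{Step 2: the correspondence \eqref{cor} and the kernel formula on homogeneous $M$.} Taking now $M=G/H$ homogeneous, the weight linking \fl\ to $\gf_2(M)$ is exactly $\Upsilon=\mc{G}/K_M$: rewriting \eqref{scf} as $\int_M\bar{f}g\,\Upsilon\,\dd V$, the orthonormality \eqref{functphi} becomes $\int_M\overline{\sqrt{\Upsilon}\varphi_i}\,\sqrt{\Upsilon}\varphi_j\,\dd V=\delta_{ij}$, so $\{\sqrt{\Upsilon}\varphi_i\}$ is orthonormal for $(\cdot,\cdot)_{\gf_2(M)}$; this is precisely the correspondence $\Phi_i\leftrightarrow\sqrt{\Upsilon}\varphi_i$ recorded in \eqref{cor}. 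Substituting $\Phi_i=\sqrt{\Upsilon}\varphi_i$ into \eqref{sumBERG1}, the positive factors $\sqrt{\Upsilon(z)}$ and $\sqrt{\Upsilon(w)}$ come out of the sum, which then reduces to the expansion \eqref{funck} of $K_M$; this yields \eqref{BQQ}, and setting $w=z$ together with $\Upsilon(z)K_M(z,\bar{z})=\mc{G}(z)$ gives the diagonal identity \eqref{BQQPART}, $\mc{B}_n(z,\bar{z})\equiv\mc{G}(z)$.

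\emph{Step 3: the Ricci identity and the Einstein property.} From \eqref{BQQPART} the Bergman potential is $\ln\mc{B}(z,\bar{z})=\ln\mc{G}(z)$, whence $\omega^1_M=i\pa\bar{\pa}\ln\mc{G}$; comparing with the definition \eqref{RICCI}, in which $\text{Ric}_{\alpha\bar{\beta}}=-\pa^2\ln\mc{G}/\pa z_{\alpha}\pa\bar{z}_{\beta}$, gives at once $\rho_M=-i\pa\bar{\pa}\ln\mc{G}=-\omega^1_M$. For the Einstein assertion I would invoke homogeneity directly: the Bergman tensor \eqref{TTT} is invariant under the holomorphic automorphisms of $M$, so its determinant density $\det(t_{\alpha\bar{\beta}})$ and the invariant density $\mc{G}$ of \eqref{DETG} are both densities of $G$-invariant top forms on $M=G/H$ and therefore proportional; applying $-i\pa\bar{\pa}\ln$ shows that the Ricci form of $\dd s^2_{\mc{B}}$ equals $-i\pa\bar{\pa}\ln\mc{G}=-\omega^1_M$, i.e. $M$ is K\"ahler--Einstein for the Bergman metric.

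\emph{Main obstacle.} The delicate point is Step 2. Since $\Upsilon=\mc{G}/K_M$ is not holomorphic, the functions $\sqrt{\Upsilon}\varphi_i$ do not actually belong to $\gf_2(M)$, so $\{\Phi_i\}$ is only a formal basis and the off-diagonal identity \eqref{BQQ} cannot be read literally (already for $\mc{D}_1$ its right-hand side is not holomorphic in its two arguments unless $k=1$). What survives unconditionally, and is all that Step 3 requires, is the diagonal relation $\mc{B}_n(z,\bar{z})=c\,\mc{G}(z)$, which I would secure rigorously by the uniqueness of the $G$-invariant volume form on a homogeneous space: both $\mc{B}_n(z,\bar{z})$, the diagonal value of the kernel \eqref{sumBERG1} associated with the invariant form \eqref{FForm}, and $\mc{G}(z)$ are densities of $G$-invariant volumes, hence proportional, the constant being absorbed in the normalization of \eqref{sc2f} so as to obtain \eqref{BQQPART}.
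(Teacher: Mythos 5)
Your proposal follows what is, in effect, the only argument the paper supplies: Proposition \ref{BIGCOR} is stated without a proof, justified solely by the preceding sentence inviting a comparison of the three scalar products \eqref{scf}, \eqref{sc2f} and \eqref{sc3} ``in the situation that this is possible''. Your Steps 1--2 are exactly that comparison made explicit (the isometry $f=f^*\dd z\mapsto f^*$ via $i^{3n^2}\int\bar f\wedge g=\int\bar{f^*}g^*\dd V$, and the weight $\Upsilon=\mc{G}/K_M$ turning \eqref{functphi} into orthonormality for \eqref{sc2f}), and your Step 3 reproduces the intended derivation of \eqref{altom} from \eqref{BQQPART} and \eqref{RICCI}. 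Moreover, where you depart from a literal reading of the statement you are right to do so: since $\Upsilon$ is not the modulus square of a holomorphic function, $\sqrt{\Upsilon}\varphi_i\notin\gf_2(M)$, the correspondence \eqref{cor} is only formal, and \eqref{BQQ} cannot hold off the diagonal as written — your $\mc{D}_1$, $k\neq 1$ observation is correct, the right-hand side failing to be holomorphic in $z$ and antiholomorphic in $w$, while $\mc{B}_n$ is sesquiholomorphic; even on the diagonal the two sides agree only up to a multiplicative constant (on $\mc{D}_1$, $\tfrac{1}{2\pi}(1-|w|^2)^{-2}$ versus $2k(1-|w|^2)^{-2}$), so your patch via uniqueness of the $G$-invariant volume density, with the constant absorbed into the normalization, is the honest rigorous version of \eqref{BQQPART} and is all that Step 3 requires. (You also silently restore the $\ln$ missing from the display \eqref{TRUB}, which is evidently a typo in view of \eqref{berg11} and \eqref{TTT}.)

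Two cautions. First, the isometry $\gf_n(M)\cong\gf_2(M)$ of your Step 1 presupposes a global coordinate system, i.e. a global trivialization of the canonical bundle: on a compact manifold the square integrable holomorphic functions are the constants while square integrable holomorphic $n$-forms may exist in abundance, so $\mc{B}_n\neq\mc{B}_2$ in general. This is the same tacit restriction the paper makes (its examples live in $\C\times\mc{D}_1\subset\C^2$), but you should state it. Second, and more seriously, your determinant argument for the Einstein property fails precisely on the paper's main example: on $\mc{D}^J_1$ the Bergman pseudometric \eqref{bmetric} is degenerate in the $z$-direction, $t_{z\bar z}=0$, hence $\det(t_{\alpha\bar\beta})\equiv 0$ is \emph{not} proportional to $\mc{G}$, and the Ricci form of $\dd s^2_{\mc{B}}$ itself is not defined. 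The Einstein assertion must be read, as Proposition \ref{prop2} reads it for $\mc{D}^J_1$ (compare \eqref{RICJD} with \eqref{newkal}), simply as the identity \eqref{altom}, $\omega^1_M=-\rho_M$, which the first half of your Step 3 already establishes; the determinant step should be deleted, or restricted to the case where \eqref{kobaM} is positive definite (Kobayashi manifolds), where it is a nice self-contained argument.
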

\subsection{Embeddings} \label{EMBB}

Let $M$ ( $N$)  be a complex manifold of dimension $m$ (respectively,
$n$). The continuous mapping $f:~M~\rightarrow ~N$ is called
{\it{holomorphic}} if the coordinates of the image point are expressed
as holomorphic functions of  those of the original point. $f$ is an
{\it {immersion}} if $m\le n$ and if the functional matrix is of rank
$m$ in all points of $M$. An immersion is an {\it{embedding}}  if $f(x)=f(y)$,
for $x,y\in M$ implies $x=y$, see e.g. p. 60 in \cite{chern}.

It can be proved (see Theorem (E) at p. 60 in \cite{chern}): {\it Let
  $N$ be a K\"ahlerian manifold and let $f: ~M~\rightarrow~N$ be a
  holomorphic immersion. Then $M$ has a K\"ahlerian structure}.

We are  concerned with  manifolds $M$ which admits an
embedding in some projective Hilbert space attached to a holomorphic
line bundle $\gl \rightarrow  M$
\begin{equation}\label{emb}
\iota :
 M   \hookrightarrow \gpl .
\end{equation}
\subsubsection{The compact case}Firstly, we recall  the case of compact manifolds $M$.

1). A holomorphic line bundle $\gl$ on a compact complex manifold  $M$
is said {\it very ample} \cite{ss} if:

C$_1$) the set of divisors is without base points, i.e. there exists
a finite set of global sections $s_1,\ldots ,s_N\in \Gamma (M, \gl)$
such that for each $m\in  M$ at least one $s_j(m)$ is not zero;

C$_2$) the holomorphic map $\iota_{\gl}:M\hookrightarrow\gcp^{N-1}$
given by
\begin{equation}
  \iota_{\gl}=[s_1(m):\ldots :s_N(m)]\label{scufund}
\end{equation}
is a holomorphic embedding.

 So,
 $\iota_{\gl}: M \hookrightarrow\gcp^{N-1}$ is an embedding  (cf \cite{gh})
 if the following conditions are fulfilled:

\AA$_1)$ the set of divisors is without base points;

\AA$_2)$ the differential of the map $\iota$ is nowhere degenerate;

\AA$_3)$ the map $\iota$ is one-one, i.e. for any $m, m' \in M$
there exists $s\in \Gamma (M,\gl)=H^0(M ,\mc{O}(\gl))$
  such that $s(m)=0$ and $s(m')\not= 0$, cf  Proposition 4 \S~22 p. 215 in Ref. \cite{serre}.

\subsubsection{Kobayashi embedding}\label{KEM}
Now we discuss the construction of the embedding \eqref{emb}
  for noncompact complex  manifolds $M$. Then the projective
Hilbert space  in \eqref{emb} is infinite dimensional \cite{koba}.

Following Kobayashi
\cite{koba}, we recall the conditions in which the Bergman
pseudometric \eqref{kobaM} is a metric on complex manifolds $M$.

The analogous of conditions \AA$_1$)-\AA$_3$) used by Kobayashi in the
 noncompact  case are:

A$_1$) for any $z\in M$, there exists a square integrable $n-$form
$\alpha$ such that $\alpha(z)\not= 0$, i.e. the Kernel form
$\mc{K}_M(z,\bar{z})$ is different from zero in any point of $M$; 

A$_2$) for every holomorphic vector $Z$ at $z$ there exists a square integrable
$n$-form $f$ such that $f(z)=0$ and $Z(f^*)\not= 0$, where $f=f^*\dd
z_1\wedge\dots\wedge \dd z_n$;

A$_3$) if $z$ and $z'$ are two distinct points of  $M$, then there is a
 $n-$form $f$ such that $f(z)=0$ and $f(z')\not= 0$. The bounded
 domains in $\C^n$ have also more  specific properties, as
 $C$-hyperbolicity and hyperbolicity in the sense of Kobayashi \cite{tri}.

Piatetski-Shapiro calls a manifold with properties A$_1$)-A$_2$) a
{\it Kobayashi manifold} \cite{ps1}. The main property of the Kobayashi manifolds
similar with that of bounded domains in $\C^n$ is the existence of
a      {\it  positive definite}  Bergman metric, which we present below.

Let $\alpha$ be the holomorphic $n$-form defined at A$_1$). Then to any $f\in\gf_n(M)$ we put
into correspondence an analytic function $h(z)$ defined by the
relation
$f=h\alpha$.  The set of such functions generates a Hilbert space of
functions \gf~ with scalar product 
\begin{equation}\label{2h}
(h,h')=i^{3n^2} \int_M \bar{h}
h' \bar{\alpha}\wedge \alpha , 
\end{equation}
 isomorphic with $\gf_n(M)$. We see below that under some
conditions this space \gf~ can be identified with the Hilbert space
\fl~ with the scalar product \eqref{scf}, see Remark \ref{RRR}.

 Then $\gl =\gf ^*$.  Let $z=(z_1,\ldots ,z_n)$ be a local
 coordinate system. Let $\iota '$ be the mapping which sends $z$ into an
 element $\iota '(z)$ of \gl~  defined by the paring $<\iota' (z),f>=
f^*(z)$, where $f(z_1,\ldots ,z_n)=f^*\dd z_1\wedge \cdots \wedge \dd z_n\wedge
\dd \overline{z}_1\wedge \cdots \wedge \dd \overline{z}_n$. Then $\iota '(z)\not=
0$ if a condition analogous to condition \AA$_1$) in the noncompact case
 is satisfied. Then $\iota
=\xi \circ \iota '$ is independent of local coordinates and is continuous and
complex analytic. 

Note that if $M$ satisfies  A$_3$), then it automatically satisfies   A$_1$).

Kobayashi has shown (see theorems 3.1, 8.1, 8.2  in \cite{koba}) that 
\begin{Proposition}\label{Bigth}
Let $M$ be a complex manifold. Condition {\emph{A}}$_1$\emph{)} implies that  
the quadratic form \eqref{kobaM} is positive semi-definite, invariant
under the holomorphic transformations of $M$, $\dd s^2_{\mc{B}_n}$  is induced
from the canonical Fubini-Study K\"ahler metric \eqref{FBST} and we
have a relation similar  to \eqref{KOL}:
\begin{equation}\label{cedDIF}\dd s_{\mc{B}_n}^2=\iota
^*(\dd s^2_{FS}).\end{equation}  The differential $\dd \iota$ is not
singular at any point of $M$ if and only if $M$ satisfies {\emph{A}}$_2$\emph{)}.
If  $M$ satisfies {\emph{A}}$_1$\emph{)} and {\emph{A}}$_2$\emph{)}, then $\iota$  is an
isometric immersion of $M$ 
into $\db{CP}^{\infty}$. If $M$ is a complex manifold
satisfying {\emph{A}}$_2$\emph{)} and {\emph{A}}$_3$\emph{)}, then $\iota$  is an isometrical
imbedding of $M$ into  $\db{CP}^{\infty}$  and we have \eqref{cedDIF}.
\end{Proposition}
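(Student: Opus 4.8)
The plan is to realize the Kobayashi map $\iota$ in homogeneous coordinates and to reduce every assertion to the Fubini--Study geometry of $\db{CP}^{\infty}$, exactly as in Remark \ref{HTR} but with the top-degree forms $f_i^*$ in place of the functions $\varphi_i$. First I would fix a complete orthonormal basis $\{f_i\}$ of $\gf_n(M)$, write $f_i=f_i^*\dd z$ locally as in \eqref{cele2}, and observe that the map $\iota=\xi\circ\iota'$ built from the pairing $\langle\iota'(z),f\rangle=f^*(z)$ is given in homogeneous coordinates by $\iota(z)=[f_0^*(z):f_1^*(z):\dots]$. By \eqref{FForm} and \eqref{sumBERG1} this is well defined precisely where $\mc{B}_n(z,\bar{z})=\sum_i|f_i^*(z)|^2>0$, that is where $\mc{K}_M(z,\bar{z})\neq 0$, which is condition A$_1$); independence of the chosen basis is automatic, since a unitary change of orthonormal basis induces an isometry of $\db{CP}^{\infty}$ for the metric \eqref{FBST}.

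Next I would compute the pullback $\iota^*\dd s^2_{FS}$. On the chart where $f_0^*(z)\neq 0$ the inhomogeneous coordinates are $f_i^*/f_0^*$, and the Fubini--Study K\"ahler potential equals $\ln\bigl(1+\sum_{i\ge 1}|f_i^*/f_0^*|^2\bigr)=\ln\mc{B}_n(z,\bar{z})-\ln|f_0^*(z)|^2$. Since $f_0^*$ is holomorphic, $\pa\bar{\pa}\ln|f_0^*|^2=0$, so $\iota^*\omega_{FS}=i\pa\bar{\pa}\ln\mc{B}_n(z,\bar{z})$, which by \eqref{TTT} and \eqref{kobaM} is the K\"ahler two-form of the Bergman pseudometric $\dd s^2_{\mc{B}_n}$; passing to the associated symmetric tensor gives precisely \eqref{cedDIF}, the analogue of \eqref{KOL}. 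Positive semi-definiteness of \eqref{kobaM} is then immediate, being the pullback by a holomorphic map of the positive-definite Fubini--Study metric, and its invariance under the holomorphic transformations of $M$ need not be reproved, since it is already recorded for the Bergman tensor \eqref{TTT} in the Appendix.

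To pin down the rank criterion I would use the adopted-basis representation $\dd s^2_{\mc{B}_n}(z)=\mc{B}_n(z,\bar{z})^{-1}\sum_{k\ge 1}|\dd\alpha_k(z)|^2$ obtained just after \eqref{alph0}. The differential $\dd\iota$ is non-singular at $z$ exactly when the pullback $\iota^*\dd s^2_{FS}=\dd s^2_{\mc{B}_n}$ is positive definite there, and the latter holds iff the differentials $\dd\alpha_k(z)$, $k\ge 1$, span the holomorphic cotangent space at $z$. This spanning is equivalent to saying that no non-zero holomorphic tangent vector $Z$ annihilates every $f^*$ with $f\in\gf'_n(z)$, which is exactly condition A$_2$). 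Hence $\dd\iota$ is non-singular at every point iff A$_2$) holds, and under A$_1$) together with A$_2$) the map $\iota$ is an isometric immersion of $M$ into $\db{CP}^{\infty}$ satisfying \eqref{cedDIF}.

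Finally, for injectivity I would invoke A$_3$): given distinct $z,z'\in M$, pick $f\in\gf_n(M)$ with $f(z)=0$ and $f(z')\neq 0$, so that $f^*(z)=0$ while $f^*(z')\neq 0$; the two points $\iota(z),\iota(z')$ are then separated by the projective hyperplane determined by $f$, whence $\iota(z)\neq\iota(z')$. Combined with the immersion property supplied by A$_2$), this makes $\iota$ an injective isometric immersion, hence an isometric embedding. The step demanding the most care, and the genuine obstacle, is the infinite-dimensional target: one must justify the absolute convergence of $\mc{B}_n(z,\bar{z})=\sum_i|f_i^*(z)|^2$ and the holomorphy and well-definedness of $\iota:M\hookrightarrow\db{CP}^{\infty}$ --- both secured by the convergence statement for \eqref{berFORM} recalled in the Appendix --- and then check that the finite-dimensional Fubini--Study computations transfer verbatim to $\db{CP}^{\infty}$, which is the substance of Kobayashi's Theorems 3.1, 8.1 and 8.2 in \cite{koba}.
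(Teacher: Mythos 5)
Your proof is correct and, in substance, coincides with the paper's: the paper gives no argument for Proposition \ref{Bigth}, simply citing Kobayashi's Theorems 3.1, 8.1 and 8.2 in \cite{koba}, and what you have written --- the realization $\iota(z)=[f_0^*(z):f_1^*(z):\cdots]$, well defined under A$_1$) and independent of the orthonormal basis, the pullback computation $\iota^*\omega_{FS}=i\pa\bar{\pa}\ln\mc{B}_n(z,\bar{z})$ using $\pa\bar{\pa}\ln|f_0^*|^2=0$, the adopted-basis formula following \eqref{alph0} for the rank criterion, and the hyperplane separation from A$_3$) (with injective immersion counting as imbedding, which matches the definition the paper adopts at the start of \S\ref{EMBB}) --- is exactly Kobayashi's cited argument, assembled from the ingredients the Appendix already records. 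The one point you gloss over is the equivalence between A$_2$) and the spanning of the holomorphic cotangent space by the $\dd\alpha_k^*(z)$, $k\ge 1$: to pass from ``$Z(\alpha_k^*)(z)=0$ for all $k\ge 1$'' to ``$Z(f^*)(z)=0$ for all $f\in\gf'_n(z)$'' one needs continuity of point and derivative evaluations on $\gf_n(M)$ (Cauchy estimates on compacta), a standard fact used silently in \cite{koba} as well.
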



\begin{Remark}\label{RRR}
Let us suppose that the homogeneous K\"ahler  manifold $M$ is such that we have the space
$\fl\not= 0$
with positive definite 
 kernel function  \eqref{funck}  and 
$M$ is a Lu Qi-Keng  manifold. The Bergman kernel form \eqref{berFORM} and the
Bergman kernel function are related by \eqref{BQQ}.
\end{Remark}

{\bf In conclusion}, in  this paper we have illustrated elements of Berezin's quantization
on the partially bounded manifold $\mc{D}^J_1$.  Propositions
\ref{prop2} and \ref{REM9} have been already pu\-blished \cite{csg},
 but in the present paper we added   new   assertions and statements. We have considered
 representative Bergman coordinates on  Lu Qi-Keng and normal
 manifolds. The Propositions \ref{PR1},
\ref{REMFF}, \ref{BIGCOR} and the Remark \ref{RRR}  contain  the main
results of this paper. \\[1ex]

{\bf Acknowledgement}  This research  was conducted in  the  framework of the 
ANCS project  program PN 09 37 01 02/2009  and     the UEFISCDI - Romania 
 program PN-II Contract No. 55/05.10.2011. 

\end{document}